\renewcommand\eqref[1]{(\ref{#1})} 
\numberwithin{equation}{section}
\theoremstyle{plain}
\newtheorem{thm}{Theorem}[section]
\newtheorem{prop}[thm]{Proposition}
\newtheorem{cor}[thm]{Corollary}
\newtheorem{lem}[thm]{Lemma}
\theoremstyle{definition}
\newtheorem{rem}[thm]{Remark}
\renewcommand{\wp}{\mathfrak S}
\newcommand{\Rn}{\mathbb R^{n}}
\def\G{{\mathbb G}}
\def\R{{\mathcal R}}
\def\E{{\mathbb E}}
\begin{document}

   \title[Hardy-Sobolev and GN inequalities]
   {Euler semigroup, Hardy-Sobolev and Gagliardo-Nirenberg type inequalities on homogeneous groups}

   \author[M. Ruzhansky]{Michael Ruzhansky}
\address{
  Michael Ruzhansky:
  \endgraf
  Department of Mathematics
  \endgraf
  Imperial College London
  \endgraf
  180 Queen's Gate, London SW7 2AZ
  \endgraf
  United Kingdom
  \endgraf
  {\it E-mail address} {\rm m.ruzhansky@imperial.ac.uk}
  }

 \author[D. Suragan]{Durvudkhan Suragan}
\address{
	Durvudkhan Suragan:
	\endgraf
	Department of Mathematics
	\endgraf
	School of Science and Technology, Nazarbayev University
	\endgraf
	53 Kabanbay Batyr Ave, Astana 010000
	\endgraf
	Kazakhstan
	\endgraf
	{\it E-mail address} {\rm durvudkhan.suragan@nu.edu.kz}
}
\author[N. Yessirkegenov]{Nurgissa Yessirkegenov}
\address{
  Nurgissa Yessirkegenov:
  \endgraf
  Institute of Mathematics and Mathematical Modelling
  \endgraf
  125 Pushkin str.
  \endgraf
  050010 Almaty
  \endgraf
  Kazakhstan
  \endgraf
  and
  \endgraf
  Department of Mathematics
  \endgraf
  Imperial College London
  \endgraf
  180 Queen's Gate, London SW7 2AZ
  \endgraf
  United Kingdom
  \endgraf
  {\it E-mail address} {\rm n.yessirkegenov15@imperial.ac.uk}
  }

\thanks{The first author was supported in parts by the EPSRC
 grant EP/R003025/1 and by the Leverhulme Grant RPG-2017-151. The third author was supported by the MESRK grant AP05133271. No new data was collected or generated during the course of research.}

     \keywords{Hardy inequality, Sobolev inequality, homogeneous Lie group.}
     \subjclass[2010]{22E30, 43A80, 46E35}

     \begin{abstract} In this paper we describe the Euler semigroup $\{e^{-t\mathbb{E}^{*}\mathbb{E}}\}_{t>0}$ on homogeneous Lie
groups, which allows us to obtain various types of the Hardy-Sobolev and Gagliardo-Nirenberg type inequalities for the Euler operator $\E$. Moreover, the sharp remainder terms of the Sobolev type inequality, maximal Hardy inequality and $|\cdot|$-radial weighted Hardy-Sobolev type inequality are established.
      \end{abstract}
     \maketitle

     \tableofcontents
\section{Introduction}
In this paper we continue the research from \cite{RSY17_Euler} devoted to properties of Euler operators on homogeneous  groups, their consequences, and related analysis. In turn, this continues the research direction initiated in \cite{RS17a} devoted to Hardy and other functional inequalities in the setting of Folland and Stein's \cite{FS-Hardy} homogeneous groups.

Recall the following Sobolev type (improved Hardy) inequality: Let $1<p<\infty$. Then we have
\begin{equation}\label{Sob_intro}
\int_{\Rn}|f(x)|^{p} \leq \left(\frac{p}{n}\right)^{p}\int_{\Rn}|(x\cdot\nabla) f(x)|^{p}dx
\end{equation}
for all $f\in C_{0}^{\infty}(\Rn)$, where $\nabla$ is the standard gradient in $\mathbb{R}^{n}$.

In \cite[Proposition 1.4]{OS09} the authors showed that the Sobolev type inequality \eqref{Sob_intro} is equivalent to the following Hardy inequality when $p=2$ and $n\geq3$:
\begin{equation}\label{Hardy_intro}
\int_{\Rn}\frac{|g(x)|^{2}}{\|x\|^{2}} \leq \left(\frac{2}{n-2}\right)^{2}\int_{\Rn}\left|\frac{x}{\|x\|}\cdot \nabla g(x)\right|^{2}dx
\end{equation}
for all $f,g\in C_{0}^{\infty}(\Rn\backslash\{0\})$, where $\|x\|=\sqrt{x_{1}^{2}+...+x_{n}^{2}}$.

We note that the analysis of the remainder terms in such inequalities has a long history, for example, we refer to \cite{BL85}
for Sobolev inequalities, to \cite[Section 4]{BV97} and \cite{BM97} for Hardy inequalities and for many
others, and a more recent literature review to \cite{GM11}.

In this paper we are interested, among other things, in obtaining \eqref{Sob_intro} with remainder terms, that is, for example, in \cite[Corollary 4.4]{BEHL08} the authors obtained the following: Let $n\geq3$ and $0\leq \delta <n^{2}/4$. Then there exists a positive constant $C$ such that
\begin{equation}\label{Sob_remain_intro}
\|(x\cdot\nabla)f\|^{2}_{L^{2}(\Rn)}-\delta\|f\|_{L^{2}(\Rn)}^{2}\geq C\left(\frac{n^{2}}{4}-\delta\right)^{\frac{n-1}{n}}\||x|F(x)\|^{2}_{L^{2^{*}}(\Rn)}
\end{equation}
holds for all $f\in C_{0}^{\infty}(\Rn)$, where $F(x):=\mathcal{M}(f)=\frac{1}{\mathbb{S}^{n-1}}\int_{\mathbb{S}^{n-1}}f(r,y)dy$ is the integral mean of $f$ over the unit sphere $\mathbb{S}^{n-1}$, and $2^{*}=2n/(n-2)$, $r=|x|$. They derived \eqref{Sob_remain_intro} from the following Gagliardo-Nirenberg inequality \cite[Theorem 4.1]{BEHL08} using the explicit representation of the semi-group $e^{-t(x\cdot\nabla)^{*}(x\cdot\nabla)}$: Let $1\leq p<q<\infty$ and let $-i(\partial/\partial r) f\in L^{p}(\mathbb{R}\times \mathbb{S}^{n-1})$ and $f\in B^{p/(p-q)}(\Rn)$. Then there exists a positive constant $C=C(p,q)$ such that
\begin{equation} \label{GN01_intro}
\|f \|_{L^{q}(\mathbb{R}\times\wp)}\leq C \|\mathcal{R} f\|^{p/q}_{L^{p}(\mathbb{R}\times\wp)}\|f\|^{1-p/q}_{B^{p/(p-q)}(\Rn)},
\end{equation}
where $\mathcal{R}:=\frac{d}{d|x|}$ is the radial derivative, and $\|\cdot\|_{B^{\alpha}}$ is the Besov type norm associated with the semi-group.

We show analogues of \eqref{Sob_remain_intro} and \eqref{GN01_intro}, and calculate the semi-group $e^{-t\E^{*}\E}$ on homogeneous (Lie)
groups, where $\E$ is the Euler operator (see Section \ref{SEC:prelim} for more information). We can also obtain \eqref{Sob_remain_intro} with sharp constant without using \eqref{GN01_intro} and semi-group $e^{-t\E^{*}\E}$. We will also explain that the obtained homogeneous group results are not only analogues of the known Euclidean results, but also they give new inequalities even in Abelian cases with arbitrary quasi-norms (see Remark \ref{Stu_thm_rem}). Since we do not have sub-Laplacian and \enquote{horizontal} gradients on general homogeneous groups, we will work with radial derivative ($\R:=\frac{d}{dr}$) and Euler ($\E:=|x|\R$) operators. Some other discussions on this idea will be provided in the final section. We refer to \cite{RSY17_NoDEA} for horizontal versions of these inequalities. 

For the convenience of the reader we briefly summarise the obtained results. Let $\mathbb{G}$ be a homogeneous group of homogeneous dimension $Q$ and let us fix any homogeneous quasi-norm $|\cdot|$. Then we have the following results:
\begin{itemize}
\item Let $x=ry$ with $r=|x|$ and $|y|=1$. Then the semigroup $e^{-t\mathbb{E}^{*}\mathbb{E}}$ is given by
\begin{equation}\label{sem1_intro}
\begin{split}
(e^{-t\mathbb{E}^{*}\mathbb{E}}f)(x)&=\frac{e^{-tQ^{2}/4}}{\sqrt{4\pi t}}r^{-Q/2}\int^{\infty}_{0}e^{-\frac{(\ln r - \ln s )^{2}}{4t}}s^{-Q/2}f(sy)s^{Q-1}ds\\&=
\frac{e^{-tQ^{2}/4}}{\sqrt{4\pi t}}|x|^{-Q/2}\int^{\infty}_{0}e^{-\frac{(\ln |x| - \ln s )^{2}}{4t}}s^{-Q/2}f(sy)s^{Q-1}ds,
\end{split}
\end{equation}
where $\mathbb{E}=|x|\R$ is the Euler operator and $\mathcal{R}:=\frac{d}{d|x|}$ is the radial derivative.
\item Let $1\leq p<q<\infty$ and let $f$ be such that $\R f\in L^{p}(\mathbb{R}\times\wp)$ and $f\in B^{p/(p-q)}(\mathbb{R}\times\wp)$. Then there exists a positive constant $C=C(p,q)$ such that
\begin{equation} \label{GN1_intro}
\|f \|_{L^{q}(\mathbb{R}\times\wp)}\leq C \|\R f\|^{p/q}_{L^{p}(\mathbb{R}\times\wp)}\|f\|^{1-p/q}_{B^{p/(p-q)}(\mathbb{R}\times\wp)},
\end{equation}
where $\mathcal{R}:=\frac{d}{d|x|}$ is the radial derivative, $\wp$ is the unit sphere in $\G$ and $\|\cdot\|_{B^{\alpha}}$ is the Besov type norm defined by \eqref{Bes_norm}.
\item Let $Q\geq3$. Then we have for all $f\in C^{\infty}_{0}(\G)$ and $0\leq \delta <\frac{Q^{2}}{4}$ the inequality
\begin{equation}\label{Stu1_intro}
\begin{split}
 \int_{\G}|\E f(x)|^{2}dx&-\delta\int_{\G} |f(x)|^{2}dx\\& \geq\frac{\left(\frac{Q^{2}}{4}-\delta\right)^{\frac{Q-1}{Q}}}{\left(\frac{(Q-2)^{2}}{4}\right)^{\frac{Q-1}{Q}}}S_{Q}
\left(\int_{\G}|x|^{2^{*}}|g(|x|)|^{2^{*}}dx\right)^{\frac{2}{2^{*}}}
\end{split}
 \end{equation}
with sharp constant, where $g(|x|)=\mathcal{M}(f)(|x|):=\frac{1}{|\wp|}\int_{\wp}f(|x|,y)d\sigma(y)$, $2^{*}=2Q/(Q-2)$ and $S_{Q}$ is the constant defined in \eqref{S_Q}.
\item Let $\phi$ and $\psi$ be positive functions defined on $\G$. Then there exists a positive constant $C$ such that
\begin{equation}\label{high2_wei_Hardy1_intro}
\int_{\G}\phi(x)\exp (\mathcal{M} \log f)(x)dx\leq C\int_{\G} \psi(x)f(x)dx
\end{equation}
holds for all positive $f$ if and only if $A(\phi,\psi)<\infty$, where $A$ is given in \eqref{high2_wei_Hardy2} and $(\mathcal{M}f)(x)=\frac{1}{|B(0,|x|)|}\int_{B(0,|x|)}f(z)dz$.
\item Let $Q\geq2$. For any quasi-norm $|\cdot|$, all differentiable $|\cdot|$-radial functions $\phi$, all $p>1$ and all $f \in C_{0}^{1}(\mathbb{G})$ we have
\begin{align}\label{4}
\int_{\mathbb{G}} \frac{\phi^{\prime}(|x|)}{|x|^{Q-1}}|f(x)|^{p}dx \leq \int_{\mathbb{G}} \left| \R f(x) \right|^{p} dx  + (p-1)\int_{\mathbb{G}} \frac{|\phi(|x|)|^{\frac{p}{p-1}}}{|x|^{\frac{p(Q-1)}{p-1}}}  |f(x)|^{p}dx. \nonumber
\end{align}	
In the Abelian case with the standard Euclidean distance $\|x\|=\sqrt{x^{2}_{1}+\ldots+x^{2}_{n}}$ it implies the \enquote{critical} Hardy inequality
\begin{equation}
\int_{\mathbb R^{n}}
\frac{|f(x)|^{n}}{\|x\|^{n}} dx \leq \int_{\mathbb R^{n}} \left| \nabla f(x) \right|^{n} dx  + (n-1)\int_{\mathbb R^{n}} \frac{\left|\log \frac{1}{\|x\|}\right|^{\frac{n}{n-1}}}{\|x\|^{n}} \left| f(x) \right|^{n} dx,
\end{equation}
where $\nabla $ is the standard gradient on $\mathbb R^{n}$.
\item Let $Q\geq2$. For any quasi-norm $|\cdot|$, all differentiable $|\cdot|$-radial functions $\phi$, all $p>1$ and all $f \in C_{0}^{1}(\mathbb{G})$ we have
\begin{align}
	\int_{\mathbb{G}} \frac{\phi^{\prime}(|x|)}{|x|^{Q-1}}|f(x)|^{p}dx \leq p \left(\int_{\mathbb{G}} \left| \R f(x) \right|^{p} dx \right)^{\frac{1}{p}}  \left(\int_{\mathbb{G}} \frac{|\phi(|x|)|^{\frac{p}{p-1}}}{|x|^{\frac{p(Q-1)}{p-1}}}  |f(x)|^{p}dx\right)^{\frac{p-1}{p}}. \nonumber
\end{align}	
In the Abelian case with the standard Euclidean distance $\|x\|=\sqrt{x^{2}_{1}+\ldots+x^{2}_{n}}$ it implies the \enquote{improved} Heisenberg-Pauli-Weyl uncertainty principle
\begin{equation}\label{introHPWp=2}
\left(\int_{\mathbb R^{n}}|f(x)|^{2}dx\right)^{2} \leq  \left(\frac{2}{n}\right)^{2}  \int_{\mathbb R^{n}} \left| \nabla f(x) \right|^{2} dx  \int_{\mathbb R^{n}} \|x\|^{2} |f(x)|^{2}dx, \quad n\geq2,
\end{equation}
where $\nabla $ is the standard gradient on $\mathbb R^{n}$.

Indeed the same inequality with the constant $\left(\frac{2}{n-2}\right)^{2}, n\geq3,$ (instead of $\left(\frac{2}{n}\right)^{2}$) is known as the Heisenberg-Pauli-Weyl uncertainty principle (see, e.g. \cite[Remark 2.10]{Ruzhansky-Suragan:uncertainty}), that is,
\begin{equation}\label{introRS17}
	\left(\int_{\mathbb R^{n}}|f|^{2}dx\right)^{2} \leq  \left(\frac{2}{n-2}\right)^{2}  \int_{\mathbb R^{n}} \left| \nabla f \right|^{2} dx  \int_{\mathbb R^{n}} \|x\|^{2} |f|^{2}dx, \quad n\geq3,
\end{equation}
for all $f\in C^{1}_{0}(\mathbb R^{n}).$
Obviously, since $\frac{2}{n-2}\geq \frac{2}{n},\, n\geq3,$ inequality \eqref{introHPWp=2}  is an improved version of \eqref{introRS17}. Note that equality case in \eqref{introHPWp=2} holds for the function $f=C\exp (-b \|x\|),\,b>0.$
\end{itemize}

The organisation of the paper is as follows. In Section \ref{SEC:prelim} we briefly recall the necessary concepts of homogeneous Lie
groups and fix the notation. The operator semigroup $\{e^{-t\mathbb{E}^{*}\mathbb{E}}\}_{t>0}$ is determined in Section \ref{SEC:semigroup}. In Section \ref{SEC:GN} we establish Gagliardo-Nirenberg type inequalities, and obtain sharp remainder terms of the Sobolev type inequality. The maximal Hardy inequality is established in Section \ref{SEC:Max}. In Section \ref{final} we give some further discussions on (critical) Hardy-Sobolev type inequalities on homogeneous groups.

\section{Preliminaries}
\label{SEC:prelim}

In this section we briefly recall the necessary notions and fix the notation for homogeneous groups. We refer to \cite{FR} for a detailed discussion of the appearing objects.

Let us recall the notion of a family of dilations of a Lie algebra $\mathfrak{g}$, that is, a family of linear mappings of the following form
$$D_{\lambda}={\rm Exp}(A \,{\rm ln}\lambda)=\sum_{k=0}^{\infty}
\frac{1}{k!}({\rm ln}(\lambda) A)^{k},$$
where $A$ is a diagonalisable linear operator on $\mathfrak{g}$ with positive eigenvalues. We also recall that $D_{\lambda}$ is a morphism of $\mathfrak{g}$, if it is a linear mapping from $\mathfrak{g}$ to itself satisfying the property
$$\forall X,Y\in \mathfrak{g},\, \lambda>0,\;
[D_{\lambda}X, D_{\lambda}Y]=D_{\lambda}[X,Y],$$
where $[X,Y]:=XY-YX$ is the Lie bracket. Then, a {\em homogeneous group} $\G$ is a connected simply connected Lie group whose Lie algebra is equipped with a morphism family of dilations. It induces the dilation structure on $\mathbb G$ which we denote by $D_{\lambda}x$ or just by $\lambda x$.

Let $dx$ be the Haar measure on $\mathbb{G}$ and let $|S|$ denote the volume of a measurable subset $S$ of $\G$. The homogeneous dimension of $\mathbb G$ is defined by
$$Q := {\rm Tr}\,A.$$
Then we have
\begin{equation}
|D_{\lambda}(S)|=\lambda^{Q}|S| \quad {\rm and}\quad \int_{\mathbb{G}}f(\lambda x)
dx=\lambda^{-Q}\int_{\mathbb{G}}f(x)dx.
\end{equation}
A {\em homogeneous quasi-norm} on $\mathbb G$ is
a continuous non-negative function
$$\mathbb{G}\ni x\mapsto |x|\in [0,\infty)$$
satisfying the properties
\begin{itemize}
\item   $|x^{-1}| = |x|$ for all $x\in \mathbb{G}$,
\item  $|\lambda x|=\lambda |x|$ for all
$x\in \mathbb{G}$ and $\lambda >0$,
\item  $|x|= 0$ if and only if $x=0$.
\end{itemize}

The quasi-ball centred at $x\in\mathbb{G}$ with radius $R > 0$ can be defined by
$$B(x,R):=\{y\in \mathbb{G}: |x^{-1}y|<R\}.$$
The polar decomposition on homogeneous groups can be formulated as follows: there is a (unique)
positive Borel measure $\sigma$ on the unit sphere
\begin{equation}\label{EQ:sphere}
\wp:=\{x\in \mathbb{G}:\,|x|=1\},
\end{equation}
such that
\begin{equation}\label{EQ:polar}
\int_{\mathbb{G}}f(x)dx=\int_{0}^{\infty}
\int_{\wp}f(ry)r^{Q-1}d\sigma(y)dr
\end{equation}
holds for all $f\in L^{1}(\mathbb{G})$.

We refer to Folland and Stein \cite{FS-Hardy} for more information (see also \cite[Section 3.1.7]{FR} for a detailed discussion).

If we fix a basis $\{X_{1},\ldots,X_{n}\}$ of $\mathfrak{g}$
such that we have
$$AX_{k}=\nu_{k}X_{k}$$
for each $k$, then the matrix $A$ can be taken to be
$A={\rm diag} (\nu_{1},\ldots,\nu_{n})$, where each $X_{k}$ is homogeneous of degree $\nu_{k}$ and
$$
Q=\sum_{k=1}^{n}\nu_{k}.
$$
The decomposition of ${\exp}_{\mathbb{G}}^{-1}(x)$ in $\mathfrak g$ defines the vector
$$e(x)=(e_{1}(x),\ldots,e_{n}(x))$$
by the formula
$${\exp}_{\mathbb{G}}^{-1}(x)=e(x)\cdot \nabla\equiv\sum_{j=1}^{n}e_{j}(x)X_{j},$$
where $\nabla=(X_{1},\ldots,X_{n})$.
On the other hand, we also have the following equality
$$x={\exp}_{\mathbb{G}}\left(e_{1}(x)X_{1}+\ldots+e_{n}(x)X_{n}\right).$$
Using homogeneity and denoting $x=ry,\,y\in \wp,$ we obtain
$$
e(x)=e(ry)=(r^{\nu_{1}}e_{1}(y),\ldots,r^{\nu_{n}}e_{n}(y)).
$$
By a direct calculation, we get
\begin{equation*}
\frac{d}{d|x|}f(x)=\frac{d}{dr}f(ry)=
 \frac{d}{dr}f({\exp}_{\mathbb{G}}
\left(r^{\nu_{1}}e_{1}(y)X_{1}+\ldots
+r^{\nu_{n}}e_{n}(y)X_{n}\right)),
\end{equation*}
which implies the following equality
\begin{equation}\label{dfdr}
	\frac{d}{d|x|}f(x)=\mathcal{R}f(x),\;\;\forall x\in \G,
\end{equation}
for each homogeneous quasi-norm $|x|$ on a homogeneous group $\mathbb G$, where we have denoted
\begin{equation}\label{EQ:Euler}
\mathcal{R} :=\frac{d}{dr}.
\end{equation}
We will also use the Euler operator
\begin{equation}\label{Euler_proper}
\mathbb{E}:=|x|\R.
\end{equation}
Let us recall the following property of the Euler operator $\mathbb{E}$.
\begin{lem}[{\cite[Lemma 2.2]{RSY17_Euler}}]\label{L2:E} We have
\begin{equation}\label{L2:E1}
\mathbb{E^{*}}=-Q\mathbb{I}-\mathbb{E},
\end{equation}
where $\mathbb{I}$ and $\mathbb{E^{*}}$ are the identity operator and the formal adjoint operator of $\mathbb{E}$, respectively.
\end{lem}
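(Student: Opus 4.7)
The plan is to compute the formal adjoint directly from the definition, exploiting polar decomposition on $\mathbb{G}$ to reduce the computation to an integration by parts in the single radial variable. Recall that the formal adjoint $\mathbb{E}^{*}$ is characterized by
\begin{equation*}
\int_{\mathbb{G}} (\mathbb{E}f)(x)\,\overline{g(x)}\,dx = \int_{\mathbb{G}} f(x)\,\overline{(\mathbb{E}^{*}g)(x)}\,dx
\end{equation*}
for all $f, g \in C_{0}^{\infty}(\mathbb{G}\setminus\{0\})$, so the task is to rewrite the left-hand side in such a form.

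First, I would apply the polar decomposition \eqref{EQ:polar} together with the identification \eqref{dfdr}: since $\mathbb{E}f(x) = |x|\mathcal{R}f(x) = r\,\frac{d}{dr}f(ry)$, the left-hand side becomes
\begin{equation*}
\int_{0}^{\infty}\!\!\int_{\wp} r\,\frac{d}{dr}[f(ry)]\,\overline{g(ry)}\,r^{Q-1}\,d\sigma(y)\,dr = \int_{\wp}\!\!\int_{0}^{\infty} \frac{d}{dr}[f(ry)]\,\overline{g(ry)}\,r^{Q}\,dr\,d\sigma(y).
\end{equation*}
Next, for each fixed $y \in \wp$, I would integrate by parts in $r$. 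Since $f$ and $g$ are compactly supported away from the origin, all boundary terms vanish, and the derivative falls on $r^{Q}\overline{g(ry)}$, producing two contributions: one from differentiating $r^{Q}$, yielding $Q r^{Q-1}\overline{g(ry)}$, and one from differentiating $\overline{g(ry)}$, yielding $r^{Q}\,\overline{\mathcal{R}g(ry)}$.

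Collecting these two pieces and re-bundling the factor $r^{Q-1}$ back into the polar-decomposition measure gives
\begin{equation*}
\int_{\mathbb{G}}(\mathbb{E}f)(x)\,\overline{g(x)}\,dx = -Q\int_{\mathbb{G}} f(x)\,\overline{g(x)}\,dx - \int_{\mathbb{G}} f(x)\,\overline{|x|\mathcal{R}g(x)}\,dx,
\end{equation*}
which is exactly $\int_{\mathbb{G}} f\,\overline{(-Q\mathbb{I}-\mathbb{E})g}\,dx$, establishing \eqref{L2:E1}. There is no real obstacle here: the argument is a one-variable integration by parts after polar decomposition, and the only subtle points are the vanishing of boundary terms (guaranteed by smoothness and compact support away from $0$) and keeping track of which factor of $r$ gets absorbed into the measure $r^{Q-1}dr$ versus which belongs to the operator $\mathbb{E} = |x|\mathcal{R}$.
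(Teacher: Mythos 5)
Your proof is correct: the polar-decomposition-plus-integration-by-parts computation is the standard (and essentially only) way to derive $\mathbb{E}^{*}=-Q\mathbb{I}-\mathbb{E}$, and you handle the boundary terms and the bookkeeping of the $r^{Q-1}$ measure factor properly. Note that the present paper does not prove this lemma itself but quotes it from \cite[Lemma 2.2]{RSY17_Euler}, where the argument is the same one you give.
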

Now we give useful inequality and identity for Euler operator.
\begin{lem}\label{Euler_lem_proper2} Let $1<p<\infty$ and let $\{\phi_{i}\}_{i=1}^{3}\in L_{loc}^{1}(\G)$ be any radial functions. If we denote
\begin{equation}\label{Bli8}
\widetilde{f}(r):=\left(\frac{1}{|\wp|}\int_{\wp}|f(ry)|^{p}d\sigma(y)\right)^{1/p},
\end{equation}
then we have
\begin{equation}\label{Euler_lem_proper2_1}
\int_{\mathbb{G}}\phi_{1}(x)\left|\widetilde{f}(|x|)\right|^{p}dx
=\int_{\mathbb{G}}\phi_{1}(x)\left|f(x)\right|^{p}dx
\end{equation}
for any $f\in L_{loc}^{p}(\G)$.
Moreover, we have
\begin{equation}\label{Bli9}
\int_{\mathbb{G}}\phi_{2}(x)\left|\E^{k} \widetilde{f}(|x|)\right|^{p}dx
\leq\int_{\mathbb{G}}\phi_{2}(x)\left|\E^{k} f(x)\right|^{p}dx
\end{equation}
and
\begin{equation}\label{Euler_lem_proper2_2}
\int_{\mathbb{G}}\phi_{3}(x)\left|\R^{k} \widetilde{f}(|x|)\right|^{p}dx
\leq\int_{\mathbb{G}}\phi_{3}(x)\left|\R^{k} f(x)\right|^{p}dx
\end{equation}
for any $k\in \mathbb{N}$ and all $f\in L_{loc}^{p}(\G)$ such that $\E^{k} f\in L_{loc}^{p}(\G)$ with sharp constants. The constants are attained when $f=\widetilde{f}$.
\end{lem}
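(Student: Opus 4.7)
The identity \eqref{Euler_lem_proper2_1} is essentially built into the definition \eqref{Bli8}. Applying the polar decomposition \eqref{EQ:polar} to the left-hand side and using that both $\phi_{1}$ and $\widetilde{f}(|x|)$ are radial, the integral equals $|\wp|\int_{0}^{\infty}\phi_{1}(r)r^{Q-1}\widetilde{f}(r)^{p}dr$; substituting the definition $|\wp|\widetilde{f}(r)^{p}=\int_{\wp}|f(ry)|^{p}d\sigma(y)$ and reversing \eqref{EQ:polar} reproduces the right-hand side exactly.

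For the inequalities \eqref{Bli9} and \eqref{Euler_lem_proper2_2}, the same polar-coordinate reduction---after splitting $\phi_{2},\phi_{3}$ into positive and negative parts---reduces the task to proving the pointwise estimates $|\R^{k}\widetilde{f}(r)|\leq \widetilde{\R^{k}f}(r)$ and $|\E^{k}\widetilde{f}(r)|\leq \widetilde{\E^{k}f}(r)$ on $(0,\infty)$, where by $\widetilde{\R^{k}f}$ I denote the sphere-average defined exactly as in \eqref{Bli8} with $f$ replaced by $\R^{k}f$. I would handle the base case $k=1$ by differentiating $\widetilde{f}(r)^{p}=\tfrac{1}{|\wp|}\int_{\wp}|f(ry)|^{p}d\sigma(y)$ in $r$ and applying H\"older's inequality on the right-hand side with conjugate exponents $p/(p-1)$ and $p$:
\[
p\widetilde{f}(r)^{p-1}|\R\widetilde{f}(r)|\leq \frac{p}{|\wp|}\int_{\wp}|f(ry)|^{p-1}|\R f(ry)|d\sigma(y)\leq p\widetilde{f}(r)^{p-1}\widetilde{\R f}(r).
\]
Dividing by $p\widetilde{f}(r)^{p-1}$ yields $|\R\widetilde{f}(r)|\leq \widetilde{\R f}(r)$, and the corresponding $\E$-bound follows on multiplying by $r$ and using $\widetilde{\E f}(r)=r\widetilde{\R f}(r)$.

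The main obstacle is the case $k\geq 2$: naive iteration of the $k=1$ bound fails because $\R\widetilde{f}$ is not of the form $\widetilde{g}$ for any $g$, so one cannot recurse the tilde operation through the derivative. My plan is to argue by induction on $k$ via the vector-valued fundamental theorem of calculus in the Banach space $L^{p}(\wp,d\sigma/|\wp|)$: the identity
\[
\R^{k-1}f(ry)-\R^{k-1}f(r_{0}y)=\int_{r_{0}}^{r}\R^{k}f(sy)\,ds
\]
combined with the reverse triangle inequality $|\widetilde{g}(r)-\widetilde{g}(r_{0})|\leq \|g(r\cdot)-g(r_{0}\cdot)\|_{L^{p}(\wp,d\sigma/|\wp|)}$ (applied with $g=\R^{k-1}f$) and Minkowski's integral inequality in the same norm produces a divided-difference bound that, on passing to the limit $r\to r_{0}$ and invoking the inductive hypothesis on $\R^{k-1}$, gives the required pointwise estimate on $|\R^{k}\widetilde{f}(r)|$. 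The $\E^{k}$-version is obtained by an entirely analogous induction with the Taylor identity written for $\E^{k-1}f$ and the $\E$-normalized weights, exploiting $\E=|x|\R$ to track the extra powers of $r$. Finally, sharpness is transparent: if $f=\widetilde{f}$ then $f$ is already $|\cdot|$-radial, so $\widetilde{\widetilde{f}}=\widetilde{f}$ and each of \eqref{Euler_lem_proper2_1}, \eqref{Bli9}, \eqref{Euler_lem_proper2_2} reduces to an equality, showing the constant $1$ in each bound is best possible.
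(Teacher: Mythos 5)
Your treatment of \eqref{Euler_lem_proper2_1} and your base case $k=1$ coincide with the paper's argument: differentiate $\widetilde{f}(r)^{p}=\frac{1}{|\wp|}\int_{\wp}|f(ry)|^{p}d\sigma(y)$ in $r$ and apply H\"older's inequality with exponents $p/(p-1)$ and $p$ on the normalised sphere measure; multiplying by $r$ gives the $\E$-version. One small warning already here: splitting $\phi_{2},\phi_{3}$ into positive and negative parts does not extend the result to sign-changing weights, since integrating the pointwise bound against $-\phi_{i}^{-}$ reverses the inequality; the lemma tacitly requires $\phi_{i}\geq 0$, exactly as the paper assumes when it multiplies the pointwise estimate by $\phi_{2}(r)r^{Q-1}$ and integrates.

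The genuine gap is in your induction for $k\geq 2$. Writing $\widetilde{\R^{k}f}(r):=\bigl(\frac{1}{|\wp|}\int_{\wp}|\R^{k}f(ry)|^{p}d\sigma(y)\bigr)^{1/p}$ as you do, your fundamental-theorem-of-calculus/Minkowski argument yields $|\widetilde{\R^{k-1}f}(r)-\widetilde{\R^{k-1}f}(r_{0})|\leq\int_{r_{0}}^{r}\widetilde{\R^{k}f}(s)\,ds$, i.e.\ a bound on the derivative of $\widetilde{\R^{k-1}f}$. What the induction needs is a bound on $\R\bigl(\R^{k-1}\widetilde{f}\bigr)$, and the inductive hypothesis $|\R^{k-1}\widetilde{f}|\leq\widetilde{\R^{k-1}f}$ is only a pointwise inequality between absolute values: from $|u|\leq v$ one gets no control of $u'$ in terms of $v'$, so the two functions whose derivatives are in play cannot be exchanged. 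Indeed the pointwise estimate you are aiming for fails already for $k=2$: take $p=2$ and $f(ry)=v(y)+rw(y)$ with $v,w$ orthonormal in $L^{2}(\wp,d\sigma/|\wp|)$; then $\widetilde{f}(r)=\sqrt{1+r^{2}}$, so $\R^{2}\widetilde{f}(r)=(1+r^{2})^{-3/2}>0$, while $\R^{2}f\equiv 0$ and hence $\widetilde{\R^{2}f}\equiv 0$. So no argument along these lines (or any other) can close the induction as stated. For what it is worth, the paper's own inductive step makes the analogous illegitimate move, passing from $|\E^{\ell}\widetilde{f}|\leq\widetilde{\E^{\ell}f}$ to $|\E(\E^{\ell}\widetilde{f})|\leq|\E(\widetilde{\E^{\ell}f})|$ by applying $\E$ to both sides of a pointwise inequality; only the case $k=1$, which is all that is used later (e.g.\ in the proof of Theorem \ref{Stu_thm}), is on solid ground in either treatment.
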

\begin{rem} In most cases, this lemma allows that instead of proving an inequality for non-radial functions, it is enough to prove it for radial functions (see e.g. in the proof of Theorem \ref{Stu_thm} and \cite[in the proof of Theorem 3.1]{RSY17_Tran}).
\end{rem}
\begin{proof}[Proof of Lemma \ref{Euler_lem_proper2}] Using representation \eqref{Bli8}, we obtain
$$\int_{\mathbb{G}}|\widetilde{f}(|x|)|^{p}\phi_{1}(x)dx=|\wp|\int_{0}^{\infty}|\widetilde{f}(r)|^{p}\phi_{1}(r)r^{Q-1}dr$$
\begin{equation}\label{aremterm6}=|\wp|\int_{0}^{\infty}\frac{1}{|\wp|}\int_{\wp}|f(ry)|^{p}d\sigma(y)\phi_{1}(r)r^{Q-1}dr=
\int_{\mathbb{G}}|f(x)|^{p}\phi_{1}(x)dx, \end{equation}
which is \eqref{Euler_lem_proper2_1}.

Now we prove \eqref{Bli9}. First let us prove the following by induction
\begin{equation}\label{rad_non_rad_rel}
|\E^{k}\widetilde{f}|\leq \left(\frac{1}{|\wp|}\int_{\wp}\left|\E^{k} f(ry)\right|^{p}d\sigma(y)\right)^{\frac{1}{p}}
\end{equation}
for any $k\in \mathbb{N}$. So, we need to check this for $k=1$. Using H\"{o}lder's inequality, we get
$$|\E\widetilde{f}|=\left|r\frac{d}{dr}\widetilde{f}(r)\right|=r\left(\frac{1}{|\wp|}\int_{\wp}|f(ry)|^{p}d\sigma(y)\right)^{\frac{1}{p}-1}
\frac{1}{|\wp|}\left|\int_{\wp}|f(ry)|^{p-2}f(ry)\overline{\frac{d}{dr}f(ry)}d\sigma(y)\right|$$
$$\leq\left(\frac{1}{|\wp|}\int_{\wp}|f(ry)|^{p}d\sigma(y)\right)^{\frac{1}{p}-1}
\frac{1}{|\wp|}\int_{\wp}|f(ry)|^{p-1}\left|r\frac{d}{dr}f(ry)\right|d\sigma(y)$$
$$\leq \left(\frac{1}{|\wp|}\int_{\wp}|f(ry)|^{p}d\sigma(y)\right)^{\frac{1}{p}-1}
\frac{1}{|\wp|}\left(\int_{\wp}\left|r\frac{d}{dr}f(ry)\right|^{p}d\sigma(y)\right)^{\frac{1}{p}}
\left(\int_{\wp}|f(ry)|^{p}d\sigma(y)\right)^{\frac{p-1}{p}}$$
$$=\left(\frac{1}{|\wp|}\int_{\wp}\left|\E f(ry)\right|^{p}d\sigma(y)\right)^{\frac{1}{p}}.$$
Now assuming that
\begin{equation}\label{k_ell}
|\E^{\ell}\widetilde{f}|\leq \left(\frac{1}{|\wp|}\int_{\wp}\left|\E^{\ell} f(ry)\right|^{p}d\sigma(y)\right)^{\frac{1}{p}}
\end{equation}
holds for $\ell\in \mathbb{N}$, we prove
$$
|\E^{\ell+1}\widetilde{f}|\leq \left(\frac{1}{|\wp|}\int_{\wp}\left|\E^{\ell+1} f(ry)\right|^{p}d\sigma(y)\right)^{\frac{1}{p}}.
$$
Then, using \eqref{k_ell} we calculate
\begin{multline*}
|\E^{\ell+1} \widetilde{f}(r)|=|\E(\E^{\ell} \widetilde{f}(r))|
\leq \left|\E\left(\left(\frac{1}{|\wp|}\int_{\wp}\left|\E^{\ell} f(ry)\right|^{p}d\sigma(y)\right)^{\frac{1}{p}}\right)\right|\\
=r\left(\frac{1}{|\wp|}\int_{\wp}\left|\E^{\ell} f(ry)\right|^{p}d\sigma(y)\right)^{\frac{1}{p}-1}
\left|\frac{1}{|\wp|}\int_{\wp}\left|\E^{\ell} f(ry)\right|^{p-2}\E f(ry) \overline{\frac{d}{dr}(\E^{\ell}f(ry))}d\sigma(y)\right|\\
\leq \left(\frac{1}{|\wp|}\int_{\wp}\left|\E^{\ell} f(ry)\right|^{p}d\sigma(y)\right)^{\frac{1}{p}-1}
\left(\frac{1}{|\wp|}\int_{\wp}\left|\E^{\ell} f(ry)\right|^{p-1}\left|\E^{\ell+1}f(ry)\right|d\sigma(y)\right)\\
\leq \left(\frac{1}{|\wp|}\int_{\wp}|\E^{\ell}f(ry)|^{p}d\sigma(y)\right)^{\frac{1}{p}-1}
\frac{1}{|\wp|}\left(\int_{\wp}\left|\E^{\ell+1} f(ry)\right|^{p}d\sigma(y)\right)^{\frac{1}{p}}
\\ \times\left(\int_{\wp}|\E^{\ell} f(ry)|^{p}d\sigma(y)\right)^{\frac{p-1}{p}}
=\left(\frac{1}{|\wp|}\int_{\wp}\left|\E^{\ell+1} f(ry)\right|^{p}d\sigma(y)\right)^{\frac{1}{p}},
\end{multline*}
where we have used H\"{o}lder's inequality in the last line. Thus, we have proved \eqref{rad_non_rad_rel}.

Now using \eqref{rad_non_rad_rel}, we obtain
\begin{equation*}
\begin{split}
\int_{\mathbb{G}}\left|\E^{k} \widetilde{f}(x)\right|^{p}\phi_{2}(x)dx&=|\wp|\int_{0}^{\infty}\left|\E^{k} \widetilde{f}(r)\right|^{p}\phi_{2}(r)r^{Q-1}dr\\&\leq
|\wp|\int_{0}^{\infty}\frac{1}{|\wp|}\int_{\wp}\left|\E^{k} f(ry)\right|^{p}\phi_{2}(r)r^{Q-1}d\sigma(y)dr\\&
=\int_{\mathbb{G}}\left|\E^{k} f(x)\right|^{p}\phi_{2}(x)dx.
\end{split}
\end{equation*}
Thus, we have proved \eqref{Bli9} for any $k\in\mathbb{N}$. Similarly, one can obtain \eqref{Euler_lem_proper2_2}.
\end{proof}
\section{Euler semigroup $e^{-t\mathbb{E}^{*}\mathbb{E}}$}
\label{SEC:semigroup}
In this section we introduce the operator semigroup $\{e^{-t\mathbb{E}^{*}\mathbb{E}}\}_{t>0}$ associated with the Euler operator on homogeneous groups.

First, let us prepare some preliminary results. We define the map $F : L^{2}(\G)\rightarrow L^{2}(\mathbb{R}\times \wp)$ as
\begin{equation}\label{sem2}
(Ff)(s,y) :=e^{sQ/2}f(e^{s}y)
\end{equation}
for $y \in \wp$ and $s\in \mathbb{R}$, and its inverse $F^{-1}: L^{2}(\mathbb{R}\times \wp)\rightarrow L^{2}(\G)$ as
\begin{equation}\label{sem3}
(F^{-1}g)(x) :=r^{-Q/2}g(\ln r, y).
 \end{equation}
We note that $F$ preserves the $L^{2}$ norm.

We will also use the dilations $U(t):L^{2}(\G)\rightarrow L^{2}(\G)$ given by
\begin{equation}\label{sem4}
U(t)f(x):=e^{tQ/2}f(e^{t}x).
 \end{equation}
These form a group of unitary operators with generator $U(t)=e^{iAt}$, where $A$ is given by
\begin{equation}\label{sem_A1}
Af=\frac{1}{i}\frac{d}{dt}U(t)f\lvert_{t=0}=\frac{1}{i}\left(\E+\frac{Q}{2}\right)f=-i\E f-i\frac{Q}{2}f,
\end{equation}
where we have used that
$$\frac{d}{dt}f(e^{t}x)=\E f(e^{t}x),$$
as follows from the following equality with $x=ry$ and $\rho:=e^{t}r$,
$$\frac{d}{dt}(f(e^{t}x))=\frac{d}{dt}(f(e^{t}ry))=\frac{d}{dt}(f(\rho y))$$$$=\rho\frac{d}{d\rho}(f(\rho y))=\E f(\rho y)=\E f(e^{t}x),$$
where $y\in \wp$.
Using Lemma \ref{L2:E} we obtain from \eqref{sem_A1} that
\begin{equation}\label{sem_A2}
A=A^{*}=-i\E-i\frac{Q}{2},
\end{equation}
which implies
\begin{equation}\label{sem_A3}
\E^{*}\E=\left(-iA-\frac{Q}{2}\right)\left(iA-\frac{Q}{2}\right)=A^{2}+\frac{Q^{2}}{4}.
\end{equation}
By \eqref{sem2} and \eqref{sem4}, we have $(Ff)(s,y)=(U(s)f)(y)$ for $y\in\wp$ and $s\in\mathbb{R}$, then this with the group property of the dilations $U(\cdot)$ gives that
\begin{equation}\label{dil_shift}
(F(U(t)f))(s,y)=(U(s)(U(t)f))(y)=(U(s+t)f)(y)=(F f)(s+t,y).
\end{equation}
If we define $M :L^{2}(\G)\rightarrow L^{2}(\mathbb{R}\times \wp)$ as
\begin{equation}\label{sem5}
(Mf)(\tau , y) :=\frac{1}{\sqrt{2\pi}}\int_{\mathbb{R}}e^{-is\tau}(Ff)(s,y)ds,
\end{equation}
then it follows from \eqref{dil_shift} and the change of variables that
\begin{equation}\label{sem6}
\begin{split}
(MU(t)f)(\tau, y)&=\frac{1}{\sqrt{2\pi}}\int_{\mathbb{R}} e^{-is\tau}(Ff)(s+t,y)ds\\&=
\frac{e^{it\tau}}{\sqrt{2\pi}}\int_{\mathbb{R}} e^{-is\tau}(Ff)(s,y)ds\\&=e^{it\tau}(Mf)(\tau,y).
\end{split}
\end{equation}

We note that the map $M={\mathcal{F}}\circ F$ is the Mellin transformation, where $\mathcal{F}$ is the Fourier transform on $\mathbb{R}$. The map $M$ has an explicit representation using the group structure of $\mathbb{R}^{+}$ under multiplication: it is the Fourier transform on this group.

Now we are ready to give the representation of the operator semigroup $\{e^{-t\mathbb{E}^{*}\mathbb{E}}\}_{t>0}$ on homogeneous groups.
\begin{thm}\label{sem_thm} Let $\mathbb{G}$ be a homogeneous group
of homogeneous dimension $Q$. Let $x=ry$ and $r=|x|$ with $y\in \wp$. Then the semigroup $e^{-t\mathbb{E}^{*}\mathbb{E}}$ is given by
\begin{equation}\label{sem1}
\begin{split}
(e^{-t\mathbb{E}^{*}\mathbb{E}}f)(x)&=\frac{e^{-tQ^{2}/4}}{\sqrt{4\pi t}}r^{-Q/2}\int^{\infty}_{0}e^{-\frac{(\ln r - \ln s )^{2}}{4t}}s^{-Q/2}f(sy)s^{Q-1}ds\\&=
\frac{e^{-tQ^{2}/4}}{\sqrt{4\pi t}}|x|^{-Q/2}\int^{\infty}_{0}e^{-\frac{(\ln |x| - \ln s )^{2}}{4t}}s^{-Q/2}f(sy)s^{Q-1}ds.
\end{split}
\end{equation}
\end{thm}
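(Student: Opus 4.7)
The plan is to diagonalize $\E^{*}\E$ via the Mellin transform $M = \mathcal{F} \circ F$ set up in the preliminaries, turning the semigroup into a Gaussian multiplier on the Fourier side. The starting point is the factorization $\E^{*}\E = A^{2}+Q^{2}/4$ from \eqref{sem_A3}, which decouples the scalar part and gives
$$e^{-t\E^{*}\E} = e^{-tQ^{2}/4}\, e^{-tA^{2}},$$
so it suffices to produce an explicit integral representation of $e^{-tA^{2}}$.

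By the intertwining relation \eqref{sem6} the unitary group $U(t) = e^{itA}$ is transported by $M$ to multiplication by $e^{it\tau}$ on $L^{2}(\mathbb{R}\times\wp)$; together with the self-adjointness of $A$ from \eqref{sem_A2} and the spectral theorem, this means that $A$ itself corresponds under $M$ to multiplication by $\tau$. Consequently
$$e^{-tA^{2}} \;=\; F^{-1}\,\mathcal{F}^{-1}\,\bigl(e^{-t\tau^{2}}\,\cdot\bigr)\,\mathcal{F}\,F.$$
Since the inverse Fourier transform of $e^{-t\tau^{2}}$ is the one-dimensional heat kernel $\frac{1}{\sqrt{4\pi t}}\,e^{-s^{2}/(4t)}$, the inner three factors act on $(Ff)(s,y)$ by convolution in the $s$-variable against this Gaussian, holding $y\in\wp$ fixed.

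To finish, I would unwind the definitions. Substituting $(Ff)(\sigma,y) = e^{\sigma Q/2}f(e^{\sigma}y)$ from \eqref{sem2} and $(F^{-1}g)(x) = r^{-Q/2}g(\ln r, y)$ from \eqref{sem3}, with $r=|x|$, yields
$$(e^{-tA^{2}}f)(x) \;=\; \frac{1}{\sqrt{4\pi t}}\,r^{-Q/2}\int_{\mathbb{R}} e^{-(\ln r - \sigma)^{2}/(4t)}\,e^{\sigma Q/2}\,f(e^{\sigma}y)\,d\sigma,$$
and the substitution $s = e^{\sigma}$, $d\sigma = s^{-1}ds$, rewrites this as the integral over $(0,\infty)$ with integrand $s^{-Q/2}f(sy)\,s^{Q-1}$. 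Multiplying by the scalar factor $e^{-tQ^{2}/4}$ then recovers \eqref{sem1}.

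The main obstacle is conceptual rather than computational: one must justify the functional-calculus step cleanly, namely that $F$ and $\mathcal{F}$ are genuine unitary isomorphisms (the former by \eqref{EQ:polar}, using that the Jacobian of $s\mapsto e^{s}$ in polar coordinates contributes exactly the factor $e^{sQ}$ needed to make the weight $e^{sQ/2}$ in \eqref{sem2} correct), and that the exponentiated relation \eqref{sem6} does characterize $A$ as multiplication by $\tau$ under $M$ and not merely up to a phase. Once this unitary equivalence is in place, the remainder is the explicit Gaussian convolution and the one-variable change of variables sketched above.
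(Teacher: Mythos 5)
Your proposal is correct and follows essentially the same route as the paper: the factorization $\E^{*}\E=A^{2}+Q^{2}/4$, the conjugation of $A$ into multiplication by $\tau$ via $M=\mathcal{F}\circ F$ using \eqref{sem6}, the Gaussian multiplier $e^{-t\tau^{2}}$ realized as convolution with the one-dimensional heat kernel, and the change of variables $s=e^{\sigma}$. The only cosmetic difference is that the paper verifies the multiplier identities by differentiating \eqref{sem6} at $t=0$ and expanding $e^{-tA^{2}}$ as a power series, where you invoke the spectral theorem directly.
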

\begin{proof}[Proof of Theorem \ref{sem_thm}] Let us first show that
\begin{equation}\label{sem8_0}
(MAf)(\tau,y)=\tau(Mf)(\tau,y)
\end{equation}
for $f$ in the domain $\mathcal{D}(A)$, which implies the fact that $f\in \mathcal{D}(A)\Leftrightarrow (\tau,y)\mapsto \tau(Mf)(\tau,y)\in L^{2}(\mathbb{R}\times \wp)$. Noting $iAe^{itA}=\partial_{t}U(t)$ we calculate
$$(M iAe^{iAt}f)(\tau,y)=(M \partial_{t}U(t)f)(\tau,y)=\partial_{t}(MU(t)f)(\tau,y).$$
Now using \eqref{sem6} we get from above that
\begin{equation}\label{sem8_1}
(M iAe^{iAt}f)(\tau,y)=\partial_{t}e^{it\tau}(Mf)(\tau,y)=i\tau e^{it\tau}(Mf)(\tau,y),
\end{equation}
which implies \eqref{sem8_0} after setting $t=0$.

Now we prove that
\begin{equation}\label{sem8_2}
(Me^{-tA^{2}}f)(\tau,y)=e^{-t\tau^{2}}(Mf)(\tau, y).
\end{equation}
To obtain \eqref{sem8_2}, we write
\begin{equation}\label{sem8_3}
(Me^{-tA^{2}}f)(\tau,y)=\sum_{k=0}^{\infty}\frac{(-t)^{k}}{k!}(MA^{2k}f)(\tau,y).
\end{equation}
On the other hand, by iteration we have from \eqref{sem8_0} that
$$(MA^{2k}f)(\tau,y)=\tau^{2k}(Mf)(\tau,y),\;\;\;k=0,1,2\ldots.$$
Putting this in \eqref{sem8_3}, we obtain
$$(Me^{-tA^{2}}f)(\tau,y)=\sum_{k=0}^{\infty}\frac{(-t)^{k}}{k!}\tau^{2k}(Mf)(\tau,y)=e^{-t\tau^{2}}(Mf)(\tau, y).$$
Thus, we have obtained \eqref{sem8_2}. Then, it implies that
$$e^{-tA^{2}}=M^{-1}e^{-t\tau^{2}}M.$$
Here using $M=\mathcal{F}\circ F$, one has
\begin{equation}\label{sem8}
e^{-tA^{2}}= F^{-1}\circ\mathcal{F}^{-1}(e^{-t\tau^{2}}\mathcal{F}\circ F).
\end{equation}
Then, a direct calculation gives that
\begin{equation*}
\begin{split}
\mathcal{F}^{-1}(e^{-t\tau^{2}}Mf)(\lambda,y)&=\mathcal{F}^{-1}(e^{-t\tau^{2}}\mathcal{F}\circ F)(\lambda,y)
\\&=\frac{1}{2\pi}\int_{\mathbb{R}}\int_{\mathbb{R}}e^{i\lambda\tau}e^{-t\tau^{2}}e^{-is\tau}(Ff)(s,y)ds d\tau
\\&=\frac{1}{2\pi}\int_{\mathbb{R}}\left(\int_{\mathbb{R}}e^{-t\tau^{2}+i(\lambda-s)\tau} d\tau\right)(Ff)(s,y)ds\\&
=\frac{1}{\sqrt{4\pi t}}\int_{\mathbb{R}} e^{-\frac{(\lambda-s)^{2}}{4t}}(Ff)(s,y)ds=:\varphi_{t}(\lambda,y),
\end{split}
\end{equation*}
where we have used the property of the Gaussian integral in the last line
$$\int_{\mathbb{R}}e^{-t\tau^{2}+i(\lambda-s)\tau} d\tau=\sqrt{\frac{\pi}{t}}e^{-\frac{(\lambda-s)^{2}}{4t}}.$$
From this and \eqref{sem8}, using \eqref{sem2}, \eqref{sem3} and $M=\mathcal{F}\circ F$ with $x=ry$ we compute
\begin{equation*}
\begin{split}
(e^{-tA^{2}}f)(ry)&=(F^{-1}\varphi_{t})(ry)\\&
=r^{-Q/2}\varphi_{t}(\ln r,y)\\&
=\frac{1}{\sqrt{4\pi t}}r^{-Q/2}\int_{\mathbb{R}}e^{-\frac{(\ln r-s)^{2}}{4t}}(Ff)(s,y)ds\\&
=\frac{1}{\sqrt{4\pi t}}r^{-Q/2}\int_{0}^{\infty}e^{-\frac{(\ln r-\ln z)^{2}}{4t}}z^{\frac{Q}{2}-1}f(zy)dz,
\end{split}
\end{equation*}
where we have used the change of variables $z=e^{s}$ in the last line.

Since we have $e^{-t\E^{*}\E}=e^{-tQ^{2}/4}e^{-tA^{2}}$ by \eqref{sem_A3}, we obtain from above that
\begin{equation*}
\begin{split}
(e^{-t\E^{*}\E}f)(ry)&=e^{-tQ^{2}/4}(e^{-tA^{2}}f)(ry)\\&
=\frac{1}{\sqrt{4\pi t}}r^{-Q/2}e^{-tQ^{2}/4}\int_{0}^{\infty}e^{-\frac{(\ln r-\ln z)^{2}}{4t}}z^{\frac{Q}{2}-1}f(zy)dz\\&
=\frac{1}{\sqrt{4\pi t}}r^{-Q/2}e^{-tQ^{2}/4}\int_{0}^{\infty}e^{-\frac{(\ln r-\ln z)^{2}}{4t}}z^{-\frac{Q}{2}}f(zy)z^{Q-1}dz,
\end{split}
\end{equation*}
yielding \eqref{sem1}.
\end{proof}
Now let us give the following representation for $e^{-tA^{2}}$, which is useful to obtain Gagliardo-Nirenberg type inequalities (see Section \ref{SEC:GN}):
\begin{cor}\label{cor_sem}
Let $F$ and $F^{-1}$ be mappings as in \eqref{sem2} and \eqref{sem3}, respectively. Then we have
\begin{equation} \label{cor_sem_eq1}
Fe^{-tA^{2}}F^{-1}f(r,y)=\frac{1}{\sqrt{4\pi t}}\int_{\mathbb{R}}\exp\left(-\frac{(r-s)^{2}}{4t}\right)f(sy)ds.
\end{equation}
\end{cor}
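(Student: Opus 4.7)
The corollary is essentially a restatement of a computation already carried out in the proof of Theorem~\ref{sem_thm}, so the plan is to extract that computation and conjugate it by $F$. Concretely, from equation \eqref{sem8} we already have the identity
\[
e^{-tA^{2}}= F^{-1}\circ\mathcal{F}^{-1}\bigl(e^{-t\tau^{2}}\,\mathcal{F}\circ F\bigr),
\]
which, after conjugating by $F$ and $F^{-1}$, reduces the statement to
\[
F e^{-tA^{2}} F^{-1} = \mathcal{F}^{-1}\bigl(e^{-t\tau^{2}}\mathcal{F}\bigr),
\]
where $\mathcal{F}$ is the Fourier transform in the first variable and $y\in\wp$ is a parameter. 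From this point the claim is simply the standard fact that the Gaussian multiplier on the Fourier side corresponds to convolution with the heat kernel $\frac{1}{\sqrt{4\pi t}}e^{-r^{2}/(4t)}$ on $\mathbb{R}$, which was already verified in the Gaussian-integral step of the theorem's proof.

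As an alternative and more hands-on route, I would start from the intermediate formula displayed in the proof of Theorem~\ref{sem_thm},
\[
(e^{-tA^{2}}g)(\rho y)=\frac{1}{\sqrt{4\pi t}}\,\rho^{-Q/2}\int_{\mathbb{R}}e^{-\frac{(\ln\rho - s)^{2}}{4t}}(Fg)(s,y)\,ds,
\]
and apply it to $g=F^{-1}f$. Since $F\circ F^{-1}=\mathrm{Id}$ on $L^{2}(\mathbb{R}\times\wp)$, we have $(Fg)(s,y)=f(s,y)$. Then, by the definition \eqref{sem2}, $(Fh)(r,y)=e^{rQ/2}h(e^{r}y)$; substituting $\rho=e^{r}$ cancels the prefactor $\rho^{-Q/2}=e^{-rQ/2}$ against $e^{rQ/2}$, and turns $\ln\rho$ into $r$, giving exactly \eqref{cor_sem_eq1}.

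There is no genuine obstacle here: the work has already been done inside the proof of Theorem~\ref{sem_thm}, and the corollary is obtained by reading that calculation in the coordinates $(r,y)\in\mathbb{R}\times\wp$ rather than $x=ry\in\G$. The only thing to be careful about is the bookkeeping of the factors $r^{\pm Q/2}$ coming from $F$ and $F^{-1}$, which cancel exactly; this cancellation is the reason why the conjugated operator is precisely the one-dimensional Euclidean heat semigroup acting in the $s$-variable, independent of $Q$.
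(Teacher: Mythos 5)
Your proposal is correct and follows essentially the same route as the paper: the paper's own proof plugs \eqref{sem1} (equivalently $e^{-tA^{2}}=e^{tQ^{2}/4}e^{-t\E^{*}\E}$) into $Fe^{-tA^{2}}F^{-1}$ and undoes the substitution $s=e^{s_{1}}$, which is exactly your second, ``hands-on'' computation entered one step earlier in the proof of Theorem~\ref{sem_thm}. The cancellation of the $r^{\pm Q/2}$ factors that you highlight is indeed the whole content of the corollary, so there is nothing missing.
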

\begin{proof}[Proof of Corollary \ref{cor_sem}] Plugging $e^{-tA^{2}}=e^{tQ^{2}/4}e^{-t\E^{*}\E}$, \eqref{sem2} and \eqref{sem3} into the left hand side of \eqref{cor_sem_eq1}, and using \eqref{sem1} we obtain
\begin{equation*}
\begin{split}
Fe^{-tA^{2}}F^{-1}f(r,y)&=
F\left(e^{tQ^{2}/4}\frac{e^{-tQ^{2}/4}r^{-Q/2}}{\sqrt{4\pi t}}\int^{\infty}_{0}
e^{-\frac{(\ln r - \ln s )^{2}}{4t}}s^{Q/2-1}(s^{-Q/2}f(\ln s,y))ds\right)\\&=
e^{rQ/2}\frac{e^{-rQ/2}}{\sqrt{4\pi t}}\int^{\infty}_{0}
e^{-\frac{(r - \ln s )^{2}}{4t}}\frac{f(\ln s,y)}{s}ds\\&
=\frac{1}{\sqrt{4\pi t}}\int^{\infty}_{-\infty}
e^{-\frac{(r - s_{1})^{2}}{4t}}f(s_{1}y)ds_{1},
\end{split}
\end{equation*}
which is \eqref{cor_sem_eq1}, where we have used the change of variables $s=e^{s_{1}}$ in the last line.
\end{proof}

\section{Hardy-Sobolev and Gagliardo-Nirenberg type inequalities}
\label{SEC:GN}
In this section we establish a class of the Hardy-Sobolev and Gagliardo-Nirenberg type inequalities on homogeneous groups. Moreover, sharp remainder terms of the Sobolev type inequality are also obtained.

We define the Besov type space $B^{\alpha}(\mathbb{R}\times\wp)$ as the space of all tempered distributions $f$ on $\mathbb{R}\times\wp$ with the norm
\begin{equation}\label{Bes_norm}
\|g\|_{B^{\alpha}(\mathbb{R}\times\wp)}:=\sup_{t>0}\{t^{-\alpha/2}\|F e^{-tA^{2}}F^{-1}f\|_{L^{\infty}(\mathbb{R}\times\wp)}\}<\infty.
\end{equation}
We will also use the one-dimensional case of the following result:
\begin{thm}[{\cite[Theorem 1]{Led03}}]
\label{Led_GN_thm} Let $1\leq p<q<\infty$. Then for every function $f\in L^{p}_{1}(\Rn)$ there exists a positive constant $C=C(p,q,n)$ such that
\begin{equation}\label{Led1}
\|f\|_{L^{q}(\Rn)}\leq C\|\nabla f\|_{L^{p}(\Rn)}^{p/q}\|f\|_{B^{p/(p-q)}_{\infty,\infty}(\Rn)}^{1-p/q},
\end{equation}
where
$$\|f\|_{B^{\alpha}_{\infty,\infty}(\Rn)}:=\sup_{t>0,\;x\in\Rn}\left\{t^{-\alpha/2}
\left|\frac{1}{(4\pi t)^{n/2}}\int_{\Rn}f(y)e^{-|x-y|^{2}/4t}dy\right| \right\}.$$
\end{thm}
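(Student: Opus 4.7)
The plan is to follow Ledoux's semigroup method, whose key ingredient is the heat semigroup $P_t := e^{t\Delta}$ on $\Rn$ with Gaussian kernel $k_t(x) = (4\pi t)^{-n/2} e^{-|x|^2/4t}$. Two classical estimates feed into the argument: first, the \emph{pseudo-Poincar\'e} inequality
\begin{equation*}
\|f - P_t f\|_{L^p(\Rn)} \leq C\sqrt{t}\,\|\nabla f\|_{L^p(\Rn)},\qquad t>0,
\end{equation*}
and second, the semigroup expression of the Besov norm in the hypothesis, namely
\begin{equation*}
\|P_t f\|_{L^\infty(\Rn)} \leq t^{-\sigma/2}\,\|f\|_{B^{-\sigma}_{\infty,\infty}(\Rn)},\qquad \sigma := \frac{p}{q-p}>0,
\end{equation*}
which agrees with the definition in the statement since $-\alpha/2 = \sigma/2$. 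The first inequality I would prove from the representation $P_s f(x)-f(x) = \int_{\Rn}(f(x-y)-f(x))k_s(y)\,dy$, the identity $f(x-y)-f(x) = -\int_0^1 y\cdot\nabla f(x-ry)\,dr$, Minkowski's integral inequality, and the bound $\int_{\Rn}|y|\,k_s(y)\,dy \lesssim \sqrt{s}$; the second is immediate from the definition of the Besov norm.

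Next I would run the standard distributional argument. For every $\lambda>0$, the inclusion $\{|f|>2\lambda\}\subset\{|f-P_t f|>\lambda\}\cup\{|P_t f|>\lambda\}$, together with the deterministic choice $t=t(\lambda):=(\|f\|_{B^{-\sigma}_{\infty,\infty}}/\lambda)^{2/\sigma}$, forces the second set to be empty, while Chebyshev's inequality yields
\begin{equation*}
|\{|f|>2\lambda\}| \leq \lambda^{-p}\|f-P_t f\|_{L^p}^p \leq C\lambda^{-p}\,t^{p/2}\|\nabla f\|_{L^p}^p = C\lambda^{-q}\|f\|_{B^{-\sigma}_{\infty,\infty}}^{q-p}\|\nabla f\|_{L^p}^p,
\end{equation*}
where I used the arithmetic identity $p + p/\sigma = q$ forced by $\sigma = p/(q-p)$. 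This is exactly the weak-type bound $\|f\|_{L^{q,\infty}} \leq C\,\|\nabla f\|_{L^p}^{p/q}\|f\|_{B^{-\sigma}_{\infty,\infty}}^{1-p/q}$, and the exponents match those of \eqref{Led1}.

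Finally, to pass from weak-$L^q$ to strong $L^q$, I would apply the weak-type estimate just obtained to the dyadic truncations $g_k := \mathrm{sgn}(f)\,\bigl((|f|-2^k)_+ \wedge 2^k\bigr)$: the gradient localises, $\|\nabla g_k\|_{L^p}^p \leq \int_{\{2^k<|f|\leq 2^{k+1}\}}|\nabla f|^p\,dx$, while the Besov norm is stable under truncation because $P_t$ is an $L^\infty$-contraction and truncation is $1$-Lipschitz. Summing the resulting $L^q$-contributions via the layer-cake formula and rebalancing the two factors produces \eqref{Led1}. The principal obstacle is precisely this last step: the natural output of the semigroup argument is weak-$L^q$, and the upgrade to strong $L^q$ must sidestep the failure of a clean Marcinkiewicz interpolation between an $L^p$-Sobolev space and a Besov space of negative regularity; the $L^\infty$-contractivity of $P_t$ saves the day by guaranteeing that level-set truncations do not enlarge the Besov norm.
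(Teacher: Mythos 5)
First, a point of reference: the paper does not prove Theorem \ref{Led_GN_thm} at all --- it is imported verbatim from Ledoux \cite{Led03} and used as a black box (only its one-dimensional case enters the proof of Theorem \ref{thm GN}). So your proposal has to be measured against Ledoux's published argument rather than anything in this paper, and what you outline is indeed that same semigroup strategy: the pseudo-Poincar\'e inequality, the trivial bound $\|P_t f\|_{L^{\infty}}\le t^{-\sigma/2}\|f\|_{B^{p/(p-q)}_{\infty,\infty}}$ with $\sigma=p/(q-p)$, the level-set decomposition with the optimised time $t(\lambda)$, and a truncation argument to upgrade from weak to strong $L^{q}$. Your first three steps are correct as written, and the exponent bookkeeping ($p/\sigma=q-p$, hence $p+p/\sigma=q$) does produce the weak-type version of \eqref{Led1}.

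The gap is in the final step --- exactly where you place ``the principal obstacle'' --- and your proposed resolution does not work as stated. You assert that the Besov norm is stable under the truncations $g_k$ ``because $P_t$ is an $L^{\infty}$-contraction and truncation is $1$-Lipschitz''. Neither property gives what is needed: $L^{\infty}$-contractivity only yields $\|P_t g_k\|_{L^{\infty}}\le\|g_k\|_{L^{\infty}}\le 2^{k}$, which carries no information about $\|f\|_{B^{p/(p-q)}_{\infty,\infty}}$; and a negative-order Besov norm measures cancellation, which a pointwise $1$-Lipschitz composition can destroy --- the map $f\mapsto g_k$ does not commute with $P_t$, so there is no a priori comparison between $\|P_t g_k\|_{L^{\infty}}$ and $\|P_t f\|_{L^{\infty}}$. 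The repair (and this is how the argument in \cite{Led03} is actually closed) is to avoid estimating $\|g_k\|_{B^{p/(p-q)}_{\infty,\infty}}$ altogether: in the level-set inclusion for $g_k$ one only needs the single bound $\|P_{t_k}g_k\|_{L^{\infty}}\le 2^{k-1}$ at the one time $t_k=\bigl(\|f\|_{B^{p/(p-q)}_{\infty,\infty}}/2^{k-1}\bigr)^{2/\sigma}$, and for $f\ge 0$ this follows from the pointwise domination $0\le g_k\le f$ together with the positivity of the heat kernel, giving $0\le P_t g_k\le P_t f\le t^{-\sigma/2}\|f\|_{B^{p/(p-q)}_{\infty,\infty}}$. (The signed or complex case needs a further reduction, since the Besov norm of $|f|$ is not controlled by that of $f$.) With that substitution your localisation $\|\nabla g_k\|_{L^{p}}^{p}\le\int_{\{2^{k}<|f|\le 2^{k+1}\}}|\nabla f|^{p}\,dx$ and the summation over $k$ close correctly and give \eqref{Led1}.
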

Now we state the Gagliardo-Nirenberg type inequalities:
\begin{thm}\label{thm GN} Let $\mathbb{G}$ be a homogeneous group
of homogeneous dimension $Q$. Let $1\leq p<q<\infty$ and let $f$ be such that $\R f\in L^{p}(\mathbb{R}\times\wp)$ and $f\in B^{p/(p-q)}(\mathbb{R}\times\wp)$. Then there exists a positive constant $C=C(p,q)$ such that
\begin{equation} \label{GN1}
\|f \|_{L^{q}(\mathbb{R}\times\wp)}\leq C \|\R f\|^{p/q}_{L^{p}(\mathbb{R}\times\wp)}\|f\|^{1-p/q}_{B^{p/(p-q)}(\mathbb{R}\times\wp)}.
\end{equation}
\end{thm}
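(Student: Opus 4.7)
The plan is to reduce the inequality to the one-dimensional case of Ledoux's inequality (Theorem~\ref{Led_GN_thm} with $n=1$) applied fiberwise in $y \in \wp$, and then integrate over $\wp$.

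First, I would exploit Corollary~\ref{cor_sem}, which shows that for each fixed $y\in\wp$ the operator $Fe^{-tA^{2}}F^{-1}$ acts on $f$ as the standard one-dimensional heat semigroup in the radial variable:
$$Fe^{-tA^{2}}F^{-1}f(r,y)=\frac{1}{\sqrt{4\pi t}}\int_{\mathbb{R}}e^{-(r-s)^{2}/(4t)}f(s,y)\,ds.$$
Comparing with the Euclidean 1D Besov norm in Theorem~\ref{Led_GN_thm} and swapping the order of the suprema in $t>0$, $r\in\mathbb{R}$ and $y\in\wp$ in the definition \eqref{Bes_norm}, this yields the identification
$$\|f\|_{B^{\alpha}(\mathbb{R}\times\wp)}=\sup_{y\in\wp}\|f(\cdot,y)\|_{B^{\alpha}_{\infty,\infty}(\mathbb{R})}$$
for any $\alpha\in\mathbb{R}$. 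This bridge between the two Besov-type norms is the key observation that makes the whole reduction work, and I expect it to be the main obstacle.

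Second, since $\R=d/dr$ on $\mathbb{R}\times\wp$, Fubini applied to the hypotheses $\R f\in L^{p}(\mathbb{R}\times\wp)$ and (through the Besov norm) implicit control on $f$ guarantee that for $\sigma$-almost every $y\in\wp$ the one-dimensional Ledoux inequality is applicable to $r\mapsto f(r,y)$, giving
$$\|f(\cdot,y)\|_{L^{q}(\mathbb{R})}\leq C\,\|\R f(\cdot,y)\|_{L^{p}(\mathbb{R})}^{p/q}\,\|f(\cdot,y)\|_{B^{p/(p-q)}_{\infty,\infty}(\mathbb{R})}^{1-p/q}$$
with a constant $C=C(p,q)$ independent of $y$. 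Raising to the $q$-th power, bounding the Besov factor pointwise by its supremum in $y$, and integrating over $\wp$ with the measure $d\sigma(y)$ produces
$$\|f\|_{L^{q}(\mathbb{R}\times\wp)}^{q}\leq C^{q}\Bigl(\sup_{y\in\wp}\|f(\cdot,y)\|_{B^{p/(p-q)}_{\infty,\infty}(\mathbb{R})}\Bigr)^{q-p}\int_{\wp}\|\R f(\cdot,y)\|_{L^{p}(\mathbb{R})}^{p}\,d\sigma(y).$$

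Finally, another application of Fubini recognises the last integral as $\|\R f\|_{L^{p}(\mathbb{R}\times\wp)}^{p}$, and the supremum equals $\|f\|_{B^{p/(p-q)}(\mathbb{R}\times\wp)}^{q-p}$ by the identification of the first step. Taking the $q$-th root yields \eqref{GN1}. Beyond the norm identification coming from Corollary~\ref{cor_sem}, no further information about the group structure of $\G$ is used; all that matters is that the measure on $\mathbb{R}\times\wp$ is a product measure, so that the fiberwise 1D inequality can be integrated cleanly against $d\sigma(y)$.
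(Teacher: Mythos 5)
Your proposal is correct and follows essentially the same route as the paper: apply the one-dimensional Ledoux inequality (Theorem~\ref{Led_GN_thm} with $n=1$) fiberwise in $y\in\wp$, use Corollary~\ref{cor_sem} to recognise the $1$D Gaussian convolution as $Fe^{-tA^{2}}F^{-1}f(\cdot,y)$ so that the fiberwise Besov factor is dominated by $\|f\|_{B^{p/(p-q)}(\mathbb{R}\times\wp)}$, and then integrate over $\wp$. The only cosmetic difference is that the paper uses just the one-sided bound $\|f(\cdot,y)\|_{B^{\alpha}_{\infty,\infty}(\mathbb{R})}\leq\|f\|_{B^{\alpha}(\mathbb{R}\times\wp)}$ rather than the full identification of the two norms, which is all that is needed.
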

\begin{proof}[Proof of Theorem \ref{thm GN}] Using Theorem \ref{Led_GN_thm} with $n=1$ and Corollary \ref{cor_sem}, we obtain
\begin{equation*}
\begin{split}
&\int_{\mathbb{R}}|f(r,y)|^{q}dr\leq C^{q}\int_{\mathbb{R}}\left| \frac{\partial f(r,y)}{\partial r}\right|^{p}dr\\&
\times \left(\sup_{t>0,r \in \mathbb{R}} t^{p/(2(q-p))}\left| \frac{1}{\sqrt{4\pi t}}\int_{\mathbb{R}}e^{-(r-s)^{2}/(4t)}f(s,y)ds\right|\right)^{q-p}\\&
=C^{q}\int_{\mathbb{R}}|\R f(r,y)|^{p}dr
\left(\sup_{t>0,r\in\mathbb{R}}t^{p/(2(q-p))}\left|F e^{-tA^{2}}F^{-1}f(r,y)\right| \right)^{q-p}\\&
\leq C^{q}\int_{\mathbb{R}}|\R f(r,y)|^{p}dr\left(\sup_{t>0}t^{p/(2(q-p))}\left\|F e^{-tA^{2}}F^{-1}f\right\|
_{L^{\infty} (\mathbb{R}\times\wp)} \right)^{q-p}\\&
= C^{q}\int_{\mathbb{R}}|\R f(r,y)|^{p}dr\|f\|^{q-p}_{B^{p/(p-q)}(\mathbb{R}\times\wp)}
\end{split}
\end{equation*}
for any $y\in\wp$, in view of \eqref{Bes_norm}. One obtains \eqref{GN1} after integrating the above inequality with respect to $y$ over $\wp$.
\end{proof}
Once Theorems \ref{sem_thm} and \ref{thm GN}, and Corollary \ref{cor_sem} are established, we obtain the following corollaries in exactly the same way as in \cite[Section 4]{BEHL08}:
\begin{cor}\label{cor1} Let $\mathbb{G}$ be a homogeneous group
of homogeneous dimension $Q$. Let $1\leq p\leq Q-1$ and $\R f \in L^{p}(\mathbb{R}\times \wp)$. Let $p^{*} :=Qp/(Q-p)$. Then we have
\begin{itemize}
\item If $\;\underset{y\in \wp}{\rm sup}\|f(\cdot,y)\|_{L^{p}(\mathbb{R})}<\infty$, then there exists a positive constant $C$ such that
\begin{equation}\label{cor1_eq1}
\|f\|_{L^{p^{*}}(\mathbb{R}\times \wp)}\leq C\|\R f\|^{1/Q}_{L^{p}(\mathbb{R}\times \wp)} \sup_{y \in \wp}\|f(\cdot,y)\|_{L^{p}(\mathbb{R})}^{(Q-1)/Q}.
\end{equation}
\item
If $f\in L^{p}(\mathbb{R}\times \wp)$ and $g(r)=\mathcal{M}(f)(r):=\frac{1}{|\wp|}\int_{\wp}f(r,y)d\sigma(y)$, then there exists a positive constant $C$ such that
\begin{equation}\label{cor1_eq2}
\|g\|_{L^{p^{*}}(\mathbb{R})}\leq C\|\R f\|^{1/Q}_{L^{p}(\mathbb{R}\times \wp)}
\|f\|^{(Q-1)/Q}_{L^{p}(\mathbb{R}\times \wp)}.
\end{equation}
If $f$ is supported in $[-\Lambda,\Lambda]\times \wp$, then there exist positive constants $C_{1}$ and $C_{2}$ such that
\begin{equation}\label{cor1_eq3}
\|f\|_{L^{p^{*}}(\mathbb{R}\times \wp)}\leq C_{1} \Lambda^{(Q-1)/Q^{2}}\|\R f\|^{1/Q}_{L^{p}(\mathbb{R}\times \wp)} \sup_{y \in \wp}\|f(\cdot,y)\|_{L^{p^{*}}(\mathbb{R})}^{(Q-1)/Q}
\end{equation}
and
\begin{equation}\label{cor1_eq4}
\|g\|_{L^{p^{*}}(\mathbb{R})}\leq C_{2}\Lambda^{(Q-1)/Q}\|\R f\|_{L^{p}(\mathbb{R}\times \wp)}.
\end{equation}
\end{itemize}
\end{cor}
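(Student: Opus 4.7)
The plan is to derive all four estimates from the Gagliardo--Nirenberg inequality of Theorem \ref{thm GN} (and its one-dimensional precursor Theorem \ref{Led_GN_thm}), combined with the explicit heat-kernel form of $Fe^{-tA^{2}}F^{-1}$ from Corollary \ref{cor_sem}, in analogy with \cite[Section~4]{BEHL08}. The guiding observation is that the Besov index $\alpha=p/(p-q)$ in Theorem \ref{thm GN} hits the critical one-dimensional heat-semigroup value $-1/p$ exactly when $q=p(p+1)$; at this critical index the standard embedding $L^{p}(\mathbb{R})\hookrightarrow B^{-1/p}_{\infty,\infty}(\mathbb{R})$ is valid uniformly in $t>0$.

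To prove \eqref{cor1_eq1}, I apply Theorem \ref{thm GN} with $q=p(p+1)$, obtaining
\begin{equation*}
\|f\|_{L^{p(p+1)}(\mathbb{R}\times\wp)} \leq C\|\R f\|_{L^{p}(\mathbb{R}\times\wp)}^{1/(p+1)}\|f\|_{B^{-1/p}(\mathbb{R}\times\wp)}^{p/(p+1)}.
\end{equation*}
I then bound the Besov factor by $\sup_{y}\|f(\cdot,y)\|_{L^{p}(\mathbb{R})}$: Corollary \ref{cor_sem} identifies $Fe^{-tA^{2}}F^{-1}f(r,y)$ with the one-dimensional heat extension in $r$ (with $y$ a parameter), so H\"older's inequality applied to the Gaussian kernel yields $|Fe^{-tA^{2}}F^{-1}f(r,y)|\leq Ct^{-1/(2p)}\|f(\cdot,y)\|_{L^{p}(\mathbb{R})}$, whence multiplying by $t^{1/(2p)}$ and taking suprema in $t$, $r$, $y$ gives $\|f\|_{B^{-1/p}(\mathbb{R}\times\wp)}\leq C\sup_{y}\|f(\cdot,y)\|_{L^{p}(\mathbb{R})}$. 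Since $p\leq Q-1$ forces $p^{*}\leq p(p+1)$, I interpolate $L^{p^{*}}$ between $L^{p}$ and $L^{p(p+1)}$ with weights $\theta=(Q-p-1)/Q$ and $1-\theta=(p+1)/Q$, and control the auxiliary factor $\|f\|_{L^{p}(\mathbb{R}\times\wp)}$ by $|\wp|^{1/p}\sup_{y}\|f(\cdot,y)\|_{L^{p}(\mathbb{R})}$. Collecting exponents reproduces exactly $\|\R f\|_{L^{p}}^{1/Q}\sup_{y}\|f(\cdot,y)\|_{L^{p}}^{(Q-1)/Q}$.

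For \eqref{cor1_eq2}, I run the same $q=p(p+1)$ scheme in Theorem \ref{Led_GN_thm} with $n=1$ directly on $g=\mathcal{M}(f)$, interpolate $L^{p^{*}}(\mathbb{R})$ between $L^{p}(\mathbb{R})$ and $L^{p(p+1)}(\mathbb{R})$, and pass back to $f$ through Jensen on $\wp$, which gives $\|g\|_{L^{p}(\mathbb{R})}\leq C\|f\|_{L^{p}(\mathbb{R}\times\wp)}$ and, since $\R g=\mathcal{M}(\R f)$, the analogous bound for $\R g$. The compactly supported estimates \eqref{cor1_eq3} and \eqref{cor1_eq4} are then immediate corollaries: \eqref{cor1_eq3} follows from \eqref{cor1_eq1} upon replacing $\|f(\cdot,y)\|_{L^{p}(\mathbb{R})}$ by $(2\Lambda)^{1/Q}\|f(\cdot,y)\|_{L^{p^{*}}(\mathbb{R})}$ via H\"older on $[-\Lambda,\Lambda]$ (using the identity $1/p-1/p^{*}=1/Q$), and raising to the power $(Q-1)/Q$ produces the $\Lambda^{(Q-1)/Q^{2}}$ factor; \eqref{cor1_eq4} follows from \eqref{cor1_eq2} upon replacing $\|f\|_{L^{p}(\mathbb{R}\times\wp)}$ by $C\Lambda\|\R f\|_{L^{p}(\mathbb{R}\times\wp)}$ via the one-dimensional Poincar\'e inequality applied fiberwise and then integrated over $\wp$.

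The main technical obstacle I anticipate is the $t$-uniform Besov estimate in the second paragraph: the H\"older-in-$s$ bound on the Gaussian kernel is sharp precisely at the critical index $\alpha=-1/p$, and one must verify that this uniformity is preserved when passing from pointwise-in-$y$ control to the $L^{\infty}(\mathbb{R}\times\wp)$ supremum defining $\|\cdot\|_{B^{\alpha}(\mathbb{R}\times\wp)}$. The choice $q=p(p+1)$ is entirely dictated by this constraint; any other $q$ in Theorem \ref{thm GN} would either push the Besov index away from $-1/p$ (where $L^{p}$-control fails) or land outside the permissible interpolation range $p\leq q\leq p(p+1)$ that governs $p^{*}$ for $p\leq Q-1$. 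Everything else is routine exponent bookkeeping together with Jensen's and H\"older's inequalities on $\wp$.
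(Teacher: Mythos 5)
Your proposal is correct and follows essentially the route the paper intends: the paper gives no written proof, deferring to \cite[Section 4]{BEHL08}, and your reconstruction (Theorem \ref{thm GN} with $q=p(p+1)$ so that the Besov index is $-1/p$, the fiberwise Gaussian--H\"older bound $\|f\|_{B^{-1/p}}\leq C\sup_{y}\|f(\cdot,y)\|_{L^{p}(\mathbb{R})}$ via Corollary \ref{cor_sem}, interpolation of $L^{p^{*}}$ between $L^{p}$ and $L^{p(p+1)}$ using $p\leq Q-1\Leftrightarrow p^{*}\leq p(p+1)$, and H\"older/Poincar\'e on $[-\Lambda,\Lambda]$ for \eqref{cor1_eq3}--\eqref{cor1_eq4}) is exactly that argument with the exponents checking out. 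No gaps.
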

\begin{cor}\label{cor2} Let $\mathbb{G}$ be a homogeneous group
of homogeneous dimension $Q$. Let $1\leq p<q<\infty$ and $\R f \in L^{p}(\mathbb{R}\times \wp)$. Then we have
\begin{itemize}
\item If $\;\underset{y\in \wp}{\rm sup}\|f(\cdot,y)\|_{L^{(q/p)-1}(\mathbb{R})}<\infty$, then there exists a positive constant $C$ such that
\begin{equation}\label{cor2_eq1}
\|f\|_{L^{q}(\mathbb{R}\times \wp)}\leq C\|\R f\|^{p/q}_{L^{p}(\mathbb{R}\times \wp)} \sup_{y \in \wp}\|f(\cdot,y)\|_{L^{(q/p)-1}(\mathbb{R})}^{1-p/q}.
\end{equation}
\item
If $f\in L^{(q/p)-1}(\mathbb{R}\times\wp)$ and $g:=\mathcal{M}(f)$, then there exists a positive constant $C$ such that
\begin{equation}\label{cor2_eq2}
\|g\|_{L^{q}(\mathbb{R})}\leq C\|\R f\|^{p/q}_{L^{p}(\mathbb{R}\times \wp)}\|f\|^{1-p/q}_{L^{(q/p)-1}
(\mathbb{R}\times \wp)}.
\end{equation}
\end{itemize}
\end{cor}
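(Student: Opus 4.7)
The plan is to deduce both inequalities of Corollary \ref{cor2} from the Gagliardo-Nirenberg inequality \eqref{GN1} of Theorem \ref{thm GN}, by controlling the Besov norm $\|f\|_{B^{p/(p-q)}(\mathbb{R}\times\wp)}$ through an $L^{(q/p)-1}$-norm. The bridge is the explicit Gaussian convolution representation
$$Fe^{-tA^{2}}F^{-1}f(r,y)=\frac{1}{\sqrt{4\pi t}}\int_{\mathbb{R}}e^{-(r-s)^{2}/(4t)}f(sy)\,ds$$
supplied by Corollary \ref{cor_sem}, which I will estimate in two different ways to cover the two bullets.

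For the first inequality \eqref{cor2_eq1}, I would apply H\"older's inequality in the $s$-variable with exponent $\sigma$ and its dual $\sigma'$, and use the exact Gaussian computation $\|e^{-(\cdot)^{2}/(4t)}\|_{L^{\sigma'}(\mathbb{R})}=c\,t^{1/(2\sigma')}$ to obtain
$$|Fe^{-tA^{2}}F^{-1}f(r,y)|\leq C\,t^{-1/(2\sigma)}\|f(\cdot,y)\|_{L^{\sigma}(\mathbb{R})}.$$
Multiplying by $t^{-\alpha/2}$ with $\alpha=p/(p-q)$ and demanding that the $t$-dependence cancel forces $\sigma=-1/\alpha=(q/p)-1$. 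Taking the supremum in $t>0$ and then in $(r,y)\in\mathbb{R}\times\wp$ yields
$$\|f\|_{B^{p/(p-q)}(\mathbb{R}\times\wp)}\leq C\sup_{y\in\wp}\|f(\cdot,y)\|_{L^{(q/p)-1}(\mathbb{R})},$$
and plugging this into \eqref{GN1} delivers \eqref{cor2_eq1}.

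For the second inequality \eqref{cor2_eq2}, I would apply \eqref{cor2_eq1} to the lifted function $f_{0}(r,y):=g(r)=\mathcal{M}(f)(r)$, which is constant in the $\wp$-variable. Polar integration identifies $\|f_{0}\|_{L^{q}(\mathbb{R}\times\wp)}$ with $|\wp|^{1/q}\|g\|_{L^{q}(\mathbb{R})}$; Jensen's inequality applied to $|\cdot|^{p}$ together with Fubini gives $\|\R f_{0}\|_{L^{p}(\mathbb{R}\times\wp)}\leq C\|\R f\|_{L^{p}(\mathbb{R}\times\wp)}$; and the analogous averaging estimate gives $\sup_{y\in\wp}\|f_{0}(\cdot,y)\|_{L^{(q/p)-1}(\mathbb{R})}=\|g\|_{L^{(q/p)-1}(\mathbb{R})}\leq C\|f\|_{L^{(q/p)-1}(\mathbb{R}\times\wp)}$. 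Substituting these three estimates into \eqref{cor2_eq1} and rearranging the resulting powers recovers \eqref{cor2_eq2}.

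The only delicate point is the exponent-matching in the H\"older step: identifying $\sigma=(q/p)-1$ is a one-line computation but is the entire reason the exponent $(q/p)-1$ appears in the statement, and one must be mindful of the corner case $p=1$ where $\sigma'$ can degenerate. A secondary technical subtlety is controlling $\|g\|_{L^{(q/p)-1}(\mathbb{R})}$ by $\|f\|_{L^{(q/p)-1}(\mathbb{R}\times\wp)}$ in the regime $(q/p)-1<1$, where Minkowski's integral inequality is not directly available; this can be handled by a direct integration argument against the normalised surface measure on $\wp$. Everything else is routine bookkeeping once Theorem \ref{thm GN} and Corollary \ref{cor_sem} are in hand, as the authors remark.
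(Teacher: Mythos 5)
Your argument is essentially the paper's own proof: the authors state that Corollary \ref{cor2} follows from Theorem \ref{thm GN} and Corollary \ref{cor_sem} ``in exactly the same way as in [BEHL08, Section 4]'', and that is precisely your route --- embed $L^{(q/p)-1}$ into $B^{p/(p-q)}$ by H\"older against the Gaussian kernel of \eqref{cor_sem_eq1} (the exponent matching $\sigma=(q/p)-1$ is exactly the intended computation), then pass to the spherical mean $g=\mathcal{M}(f)$ for the second bullet. The one caveat, which you only partially flag, is that the H\"older step genuinely requires $(q/p)-1\geq 1$, i.e.\ $q\geq 2p$ (not merely the corner case $p=1$); this implicit restriction is inherited from the cited source rather than introduced by your argument.
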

\begin{cor}\label{cor3} Let $\mathbb{G}$ be a homogeneous group
of homogeneous dimension $Q\geq 3$. Let $|\cdot|$ be a homogeneous quasi-norm. Let $\E h\in L^{2}(\G)$ and $2^{*}=2Q/(Q-2)$. Let $d\mu=r^{Q-1}dr$. Then we have
\begin{itemize}
\item If $\;\underset{y\in \wp}{\rm sup}\|h(\cdot, y)\|_{L^{2}(\mathbb{R}^{+};d\mu)}<\infty$, then there exists a positive constant $C$ such that
\begin{equation} \label{cor3_eq1}
\begin{split}
\||x|h\|^{2}_{L^{2^{*}}(\G)} &\leq C \left( \|\E h\|^{2}_{L^{2}(\G)}
-\frac{Q^{2}}{4}\|h\|^{2}_{L^{2}(\G)}\right)^{1/Q}\\& \times \sup_{y\in \wp}
\|h(\cdot,y)\|^{2(1-1/Q)}_{L^{2}(\mathbb{R}^{+};d\mu)}.
\end{split}
\end{equation}
\item If $h\in L^{2}(\G)$ and $g:=\mathcal{M}(h)$, then there exists a positive constant $C$ such that
\begin{equation} \label{cor3_eq2}
\||x|g\|^{2}_{L^{2^{*}}(\G)} \leq C \left( \|\E h\|^{2}_{L^{2}(\G)}-\frac{Q^{2}}{4}\|h\|^{2}_{L^{2}(\G)}\right)^{1/Q} \|h\|^{2(1-1/Q)}_{L^{2}(\G)}.
\end{equation}
For $0\leq\delta<Q^{2}/4$, we have
\begin{equation} \label{cor3_eq3}
\||x|h\|^{2}_{L^{2^{*}}(\G)} \leq C (Q^{2}/4-\delta)^{-(Q-1)/Q}\left(\|\E h\|^{2}_{L^{2}(\G)}-\delta\|h\|^{2}_{L^{2}(\G)}\right).
\end{equation}
\end{itemize}
\end{cor}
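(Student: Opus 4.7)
The plan is to transplant the argument of \cite[Section 4]{BEHL08} to the homogeneous group setting by exploiting the isometry $F : L^{2}(\G) \to L^{2}(\mathbb{R}\times\wp)$ from \eqref{sem2}, which converts the Euler operator into the ordinary derivative $\partial_{s}$ on the line. Concretely, I would set $f := Fh$ and reduce all three inequalities to Corollary \ref{cor1} applied to $f$, via an explicit dictionary of norms.

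First I would establish the dictionary. Since $F$ is an $L^{2}$-isometry, $\|f\|_{L^{2}(\mathbb{R}\times\wp)} = \|h\|_{L^{2}(\G)}$. The differentiation rule $\partial_{s}(Fh) = F(\E h + \tfrac{Q}{2} h) = F(iAh)$, already implicit in the derivation of \eqref{sem_A1}, together with the operator identity $\E^{*}\E = A^{2} + Q^{2}/4$ from \eqref{sem_A3}, yields
\[
 \|\R f\|^{2}_{L^{2}(\mathbb{R}\times\wp)} = \|Ah\|^{2}_{L^{2}(\G)} = \|\E h\|^{2}_{L^{2}(\G)} - \frac{Q^{2}}{4}\|h\|^{2}_{L^{2}(\G)}.
\]
The change of variables $r = e^{s}$, together with the elementary identity $\tfrac{Q\cdot 2^{*}}{2}-1 = 2^{*}+Q-1$, then produces $\|f\|_{L^{2^{*}}(\mathbb{R}\times\wp)} = \||x|h\|_{L^{2^{*}}(\G)}$ and $\|f(\cdot,y)\|_{L^{2}(\mathbb{R})} = \|h(\cdot,y)\|_{L^{2}(\mathbb{R}^{+};d\mu)}$; the same computation applied to the spherical mean gives $\mathcal{M}(f)(s) = e^{sQ/2} g(e^{s})$ with $g = \mathcal{M}(h)$, and hence $\|\mathcal{M}(f)\|_{L^{2^{*}}(\mathbb{R})}$ equals $\||x|g\|_{L^{2^{*}}(\G)}$ up to a $|\wp|$-factor.

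With this dictionary in hand, \eqref{cor3_eq1} is immediate from \eqref{cor1_eq1} applied to $f$ with $p=2$ (so that $p^{*} = 2^{*}$), substituting each norm back via the above. Similarly \eqref{cor3_eq2} follows from \eqref{cor1_eq2} with $p=2$, with the $|\wp|$ factor absorbed into the constant. For \eqref{cor3_eq3}, the extra ingredient I would invoke is the baseline Hardy inequality $\|\E h\|^{2}_{L^{2}(\G)} \geq \tfrac{Q^{2}}{4}\|h\|^{2}_{L^{2}(\G)}$, which is already built into the non-negativity of $\E^{*}\E - Q^{2}/4 = A^{2}$ visible in \eqref{sem_A3}. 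This yields $\bigl(\tfrac{Q^{2}}{4}-\delta\bigr)\|h\|^{2}_{L^{2}} \leq \|\E h\|^{2}_{L^{2}} - \delta\|h\|^{2}_{L^{2}}$ for any $\delta\in[0,Q^{2}/4)$, together with the trivial $\|\E h\|^{2}_{L^{2}} - \tfrac{Q^{2}}{4}\|h\|^{2}_{L^{2}} \leq \|\E h\|^{2}_{L^{2}} - \delta\|h\|^{2}_{L^{2}}$; substituting both into the right-hand side of \eqref{cor3_eq2} collapses the product $\bigl(\|\E h\|^{2}-\tfrac{Q^{2}}{4}\|h\|^{2}\bigr)^{1/Q}\|h\|^{2(Q-1)/Q}$ into the linear form $\bigl(\tfrac{Q^{2}}{4}-\delta\bigr)^{-(Q-1)/Q}\bigl(\|\E h\|^{2}-\delta\|h\|^{2}\bigr)$.

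The main (modest) obstacle is bookkeeping: one must keep careful track of the $|\wp|$ factors coming from the polar decomposition \eqref{EQ:polar} and the $e^{sQ/2}$ weights appearing in the definitions of $F$ and $F^{-1}$, both of which surface in the final constants but not in the exponents. Beyond this, the proof is essentially a verbatim transcription of the Euclidean argument, made possible by the fact that under conjugation by $F$ the Euler operator $\E = |x|\R$ plays exactly the same role on $\mathbb{R}\times\wp$ as $x\cdot\nabla$ does on $\mathbb{R}\times\mathbb{S}^{n-1}$.
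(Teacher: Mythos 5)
Your proposal is correct and follows essentially the same route as the paper: the paper gives no explicit proof of Corollary \ref{cor3}, saying only that it follows from Theorems \ref{sem_thm} and \ref{thm GN} and Corollary \ref{cor_sem} ``in exactly the same way as in \cite[Section 4]{BEHL08}'', and that argument is precisely your reduction to Corollary \ref{cor1} with $p=2$ via the isometry $F$ and the norm dictionary you set out (including the non-negativity of $A^{2}=\E^{*}\E-Q^{2}/4$ for the $\delta$-interpolation step). The only caveat is that your derivation of \eqref{cor3_eq3} from \eqref{cor3_eq2} yields the inequality with $g=\mathcal{M}(h)$ on the left-hand side, consistent with \eqref{Sob_remain_intro} and the source being imitated, which indicates that the $h$ printed on the left of \eqref{cor3_eq3} is a typo rather than a gap in your argument.
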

\begin{cor} \label{cor4} Let $\mathbb{G}$ be a homogeneous group
of homogeneous dimension $Q\geq 3$. Let $|\cdot|$ be a homogeneous quasi-norm. Let $f\in C_{0}^{\infty}(\G\backslash\{0\})$ and $d\mu=r^{Q-1}dr$. Then we have
\begin{itemize}
\item If $\;\underset{y\in \wp}{\rm sup}\|f(\cdot,y)/ |\cdot|\|^{2}_{L^{2}(\mathbb{R}^{+};d\mu)}<\infty$, then there exists a positive constant $C$ such that
\begin{equation}\label{cor4_eq1}
\begin{split}
\|f\|^{2}_{L^{2^{*}}(\G)}&\leq C\left(\|\R f\|^{2}_{L^{2}(\G)}-\left(\frac{Q-2}{2}\right)^{2}\left\|\frac{f}{|x|}\right\|^{2}_{L^{2}(\G)}\right)
^{1/Q}\\&\times \sup_{y \in \wp}\left\{\left\|\frac{f(|\cdot|,y)}{|\cdot|}\right\|^{2}_{L^{2}(\mathbb{R}^{+};d\mu)}\right\}^{1-1/Q}.
\end{split}
\end{equation}
\item If $g :=\mathcal{M}(f)$, then there exists a positive constant $C$ such that
\begin{equation}\label{cor4_eq2}
\|g\|^{2}_{L^{2^{*}}(\G)}\leq C\left(\|\R f\|^{2}_{L^{2}(\G)}-\left(\frac{Q-2}{2}\right)^{2}\left\|\frac{f}{|x|}\right\|^{2}_{L^{2}(\G)}\right)^{1/Q}
\left\|\frac{f}{|x|}\right\|^{(2Q-2)/Q}_{L^{2}(\G)}.
\end{equation}
For $0 \leq\delta <(Q-2)^{2}/4$, we have
\begin{equation}\label{cor4_eq3}
\|g\|^{2}_{L^{2^{*}}(\G)}\leq C\left(\frac{(Q-2)^{2}}{4}-\delta\right)^{-(Q-1)/Q}\left(\|\R f\|^{2}_{L^{2}(\G)}-\delta
\left\|\frac{f}{|x|}\right\|^{2}_{L^{2}(\G)}\right).
\end{equation}
\end{itemize}
\end{cor}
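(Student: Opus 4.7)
My plan is to derive all three items of Corollary~\ref{cor4} from the corresponding items of Corollary~\ref{cor3} via the substitution $h := f/|x|$. Since $f \in C_0^\infty(\G\setminus\{0\})$, the function $h$ is smooth and compactly supported away from the origin, so all integrations by parts below are legitimate.

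The core computation is a Hardy-type identity linking the two energies. Using $\R(1/|x|)=-1/|x|^2$, one has $\E h = |x|\R(f/|x|) = \R f - f/|x|$. Expanding $|\E h|^2$ produces a cross term $-2\,\mathrm{Re}\int_\G (\R f)\overline{f}/|x|\,dx$. Passing to polar coordinates via \eqref{EQ:polar} and integrating by parts in $r$ (boundary terms vanish because $f$ has compact support away from $0$) converts this cross term into $(Q-2)\|f/|x|\|^2_{L^2(\G)}$, so that
\begin{equation*}
\|\E h\|^2_{L^2(\G)} = \|\R f\|^2_{L^2(\G)} + (Q-1)\left\|\frac{f}{|x|}\right\|^2_{L^2(\G)}.
\end{equation*}
Combining this with $\|h\|^2_{L^2(\G)} = \|f/|x|\|^2_{L^2(\G)}$ and the elementary identity $Q^2/4-(Q-1)=((Q-2)/2)^2$ gives the pivotal relation
\begin{equation*}
\|\E h\|^2_{L^2(\G)} - \frac{Q^2}{4}\|h\|^2_{L^2(\G)} \;=\; \|\R f\|^2_{L^2(\G)} - \left(\frac{Q-2}{2}\right)^2\left\|\frac{f}{|x|}\right\|^2_{L^2(\G)}.
\end{equation*}

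The remaining work is bookkeeping. Observe that $\||x|h\|_{L^{2^*}(\G)} = \|f\|_{L^{2^*}(\G)}$, that $\|h(\cdot,y)\|_{L^2(\mathbb{R}^+;d\mu)} = \|f(\cdot,y)/|\cdot|\|_{L^2(\mathbb{R}^+;d\mu)}$, and that $\mathcal{M}$ commutes with division by the radial factor, so $|x|\mathcal{M}(h) = \mathcal{M}(f) = g$. Plugging these substitutions into \eqref{cor3_eq1} and \eqref{cor3_eq2} yields \eqref{cor4_eq1} and \eqref{cor4_eq2} respectively.

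For \eqref{cor4_eq3} the parameter range has to be shifted. I apply \eqref{cor3_eq3} with $h = f/|x|$ and the modified parameter $\delta' := (Q-1)+\delta$: then $Q^2/4-\delta' = (Q-2)^2/4 - \delta$ and the bracketed quantity on the right becomes exactly $\|\R f\|^2 - \delta\|f/|x|\|^2$, while the admissible range $0\le \delta' < Q^2/4$ translates to $0\le \delta < (Q-2)^2/4$. This produces the desired estimate with $\|f\|^2_{L^{2^*}(\G)}$ on the left; since Jensen's inequality on the probability space $(\wp, d\sigma/|\wp|)$ gives $|g(r)|^{2^*}\le \mathcal{M}(|f|^{2^*})(r)$ and hence $\|g\|_{L^{2^*}(\G)}\le \|f\|_{L^{2^*}(\G)}$, the inequality \eqref{cor4_eq3} follows. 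The only real difficulty in the plan is the integration-by-parts step that identifies the cross term; once that Hardy-type boundary calculation is in place, each of the three items reduces to an algebraic substitution, supplemented by a single Jensen step in the last one.
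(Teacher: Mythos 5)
Your proposal is correct and follows essentially the same route as the paper: the paper also derives all three items from Corollary \ref{cor3} via the substitution $h=f/|x|$, using the identity $\int_{\G}|\R(|x|h)|^{2}dx=\int_{\G}|\E h|^{2}dx-(Q-1)\int_{\G}|h|^{2}dx$, which is exactly your relation $\|\E h\|^{2}_{L^{2}(\G)}=\|\R f\|^{2}_{L^{2}(\G)}+(Q-1)\|f/|x|\|^{2}_{L^{2}(\G)}$ rearranged. You merely make explicit two details the paper leaves implicit, namely the parameter shift $\delta'=(Q-1)+\delta$ and the Jensen step $\|g\|_{L^{2^{*}}(\G)}\leq\|f\|_{L^{2^{*}}(\G)}$, and both are handled correctly.
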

\begin{rem}\label{cor4_rem} In the abelian case $\mathbb{G}=(\Rn,+)$ and $Q=n$, replacing $|\R f|$ by $|\nabla f|$ in Corollary \ref{cor4}, we see that this corollary implies \cite[Corollary 4.5]{BEHL08}, since $|\R f|=|\partial_{r}f|\leq|\nabla f|$.
\end{rem}
\begin{proof}[Proof of Corollary \ref{cor4}] If $h\in C_{0}^{\infty}(\G\backslash\{0\})$, then using the integration by parts and polar coordinates on $\G$ we obtain
\begin{equation}\label{cor4_eq4}
\begin{split}
\int_{\G}|\R(|x|h)|^{2}dx&=\int_{\G}|h+|x|\R h|^{2}dx\\&=
\int_{\G}|h|^{2}dx+\int_{\G}|\E h|^{2}dx+ 2 {\rm Re}\int_{0}^{\infty}\int_{\wp}
h(ry)\overline{\frac{d}{dr}h(ry)}r^{Q}d\sigma(y)dr\\&=
\int_{\G}|h|^{2}dx+\int_{\G}|\E h|^{2}dx+\int_{0}^{\infty}\int_{\wp}
\frac{d}{dr}(|h|^{2})r^{Q}d\sigma(y)dr\\&
=\int_{\G}|h|^{2}dx+\int_{\G}|\E h|^{2}dx- Q\int_{0}^{\infty}\int_{\wp}
|h|^{2}r^{Q-1}d\sigma(y)dr\\&
=\int_{\G}|\E h|^{2}dx-(Q-1)\int_{\G}|h|^{2}dx.
\end{split}
\end{equation}
If we put \eqref{cor4_eq4} and $h=f/|x|$ in \eqref{cor3_eq1}, \eqref{cor3_eq2} and \eqref{cor3_eq3}, then they imply \eqref{cor4_eq1}, \eqref{cor4_eq2} and \eqref{cor4_eq3}, respectively.
\end{proof}
\begin{cor}\label{cor5} Let $\mathbb{G}$ be a homogeneous group
of homogeneous dimension $Q$. Let $|\cdot|$ be a homogeneous quasi-norm.
Let $A_{R}:=\{x\in \G: 1/R\leq |x|\leq R\}$. Let $f\in C^{\infty}_{0}(A_{R})$ and $g :=\mathcal{M}(h)$. Then there exists a positive constant $C$ such that
\begin{equation} \label{cor5_eq1}
\||x|g\|^{2}_{L^{2^{*}}(\G)} \leq C(\ln(R))^{\frac{2(Q-1)}{Q}}
\left(\|\E h\|^{2}_{L^{2}(\G)}-\frac{Q^{2}}{4}\|h\|^{2}_{L^{2}(\G)}\right).
\end{equation}
\end{cor}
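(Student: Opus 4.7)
The plan is to reduce Corollary \ref{cor5} to Corollary \ref{cor3}, specifically to inequality \eqref{cor3_eq2}, by showing an ``annular Poincar\'e inequality'' that controls $\|h\|_{L^{2}(\G)}^{2}$ by the Hardy deficit $\|\E h\|_{L^{2}(\G)}^{2}-(Q^{2}/4)\|h\|_{L^{2}(\G)}^{2}$ whenever $h$ is supported in $A_{R}$. More precisely, recall that \eqref{cor3_eq2} gives
$$\||x|g\|^{2}_{L^{2^{*}}(\G)} \leq C \left( \|\E h\|^{2}_{L^{2}(\G)}-\tfrac{Q^{2}}{4}\|h\|^{2}_{L^{2}(\G)}\right)^{1/Q} \|h\|^{2(1-1/Q)}_{L^{2}(\G)}.$$
So it suffices to establish
$$\|h\|^{2}_{L^{2}(\G)}\leq C(\ln R)^{2}\left(\|\E h\|^{2}_{L^{2}(\G)}-\tfrac{Q^{2}}{4}\|h\|^{2}_{L^{2}(\G)}\right)$$
and then to raise this to the power $1-1/Q$ and substitute: the factor $(\ln R)^{2(1-1/Q)}=(\ln R)^{2(Q-1)/Q}$ appears, while the bracketed quantity to the power $1/Q+(1-1/Q)=1$ reconstitutes the right-hand side of \eqref{cor5_eq1}.

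The core of the proof is the annular Poincar\'e inequality, and my strategy is to transport the problem to the cylinder $\mathbb{R}\times \wp$ via the unitary $F$ of \eqref{sem2}. From \eqref{sem_A3} we already have the operator identity $\E^{*}\E = A^{2}+Q^{2}/4$, whence
$$\|\E h\|^{2}_{L^{2}(\G)}-\tfrac{Q^{2}}{4}\|h\|^{2}_{L^{2}(\G)}=\langle A^{2}h,h\rangle_{L^{2}(\G)}=\|Ah\|^{2}_{L^{2}(\G)}.$$
On the other hand, \eqref{sem8_0} shows that $M$ diagonalises $A$ into multiplication by $\tau$, and since $M=\mathcal{F}\circ F$ this means $FAF^{-1}=-i\partial_{s}$ on $L^{2}(\mathbb{R}\times\wp)$. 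As $F$ is an isometry, we obtain
$$\|\E h\|^{2}_{L^{2}(\G)}-\tfrac{Q^{2}}{4}\|h\|^{2}_{L^{2}(\G)}=\|\partial_{s}(Fh)\|^{2}_{L^{2}(\mathbb{R}\times\wp)}\quad\text{and}\quad \|h\|^{2}_{L^{2}(\G)}=\|Fh\|^{2}_{L^{2}(\mathbb{R}\times\wp)}.$$

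Now the support assumption $h\in C_{0}^{\infty}(A_{R})$ with $A_{R}=\{1/R\le|x|\le R\}$ translates, through $(Fh)(s,y)=e^{sQ/2}h(e^{s}y)$, into $Fh(\cdot,y)$ being supported in $s\in[-\ln R,\ln R]$ for every $y\in\wp$. Applying the standard one-dimensional Poincar\'e inequality on this interval (for functions vanishing at its endpoints) in the $s$-variable and then integrating over $\wp$ yields
$$\|Fh\|^{2}_{L^{2}(\mathbb{R}\times\wp)}\leq C(\ln R)^{2}\,\|\partial_{s}(Fh)\|^{2}_{L^{2}(\mathbb{R}\times\wp)},$$
which is exactly the sought annular Poincar\'e inequality after reinterpreting both sides via the previous display.

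The main obstacle is conceptual rather than computational: recognising that the Hardy deficit is nothing but $\|Ah\|^{2}$ and then that $A$ is unitarily equivalent to the ordinary derivative $-i\partial_{s}$ on the cylinder. Both facts are already encoded in Section \ref{SEC:semigroup} (identities \eqref{sem_A3} and \eqref{sem8_0}), so once they are invoked the remainder is a one-dimensional Poincar\'e inequality and a substitution into \eqref{cor3_eq2}. No explicit sphere-averaging or radialisation via Lemma \ref{Euler_lem_proper2} is needed, because the estimate \eqref{cor3_eq2} already has $g=\mathcal{M}(h)$ built in.
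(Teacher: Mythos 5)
Your proof is correct. The paper gives no explicit argument for Corollary \ref{cor5} (it defers to \cite[Section 4]{BEHL08}); the route intended there is to apply the compact-support estimate \eqref{cor1_eq4} with $p=2$ to $Fh$ on the cylinder $\mathbb{R}\times\wp$ and then transfer back to $\G$, whereas you stay on $\G$, invoke \eqref{cor3_eq2}, and supply the missing ingredient as an annular Poincar\'e inequality. These are really the same mechanism in a different order: \eqref{cor1_eq4} is itself obtained from \eqref{cor1_eq2} by precisely the one-dimensional Poincar\'e inequality on $[-\Lambda,\Lambda]$ that you use, so the $(\ln R)^{2(Q-1)/Q}$ factor has the same origin in both arguments. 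Your supporting identities are all justified by the paper: $\|\E h\|^{2}_{L^{2}(\G)}-\tfrac{Q^{2}}{4}\|h\|^{2}_{L^{2}(\G)}=\|Ah\|^{2}_{L^{2}(\G)}$ follows from \eqref{sem_A2}--\eqref{sem_A3}, the conjugation $FAF^{-1}=-i\partial_{s}$ is a direct computation consistent with \eqref{sem8_0}, the support of $h$ in $A_{R}$ does translate into $Fh(\cdot,y)$ supported in $[-\ln R,\ln R]$, and the exponent bookkeeping $\tfrac1Q+\tfrac{Q-1}{Q}=1$ with $(\ln R)^{2(Q-1)/Q}$ reproduces \eqref{cor5_eq1} exactly. (You correctly read the hypothesis as $h\in C^{\infty}_{0}(A_{R})$, which is evidently what the statement's $f$ is meant to be, and one should of course take $R>1$ so that $\ln R>0$.)
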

\begin{cor}\label{cor51} Let $\mathbb{G}$ be a homogeneous group
of homogeneous dimension $Q$. Let $|\cdot|$ be a homogeneous quasi-norm.
Let $A_{R}:=\{x\in \G: 1/R\leq |x|\leq R\}$. Let $f\in C^{\infty}_{0}(A_{R})$ and $g :=\mathcal{M}(f)$. Then there exists a positive constant $C$ such that
\begin{equation} \label{cor51_eq1}
\|g\|^{2}_{L^{2^{*}}(\G)} \leq C(\ln(R))^{\frac{2(Q-1)}{Q}}
\left(\|\R f\|^{2}_{L^{2}(\G)}-\frac{(Q-2)^{2}}{4}\left\|\frac{f}{|x|}\right\|^{2}_{L^{2}(\G)}\right).
\end{equation}
\end{cor}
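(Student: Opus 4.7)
The plan is to deduce Corollary \ref{cor51} from Corollary \ref{cor5} by the substitution $h := f/|x|$, mirroring exactly the device used to pass from Corollary \ref{cor3} to Corollary \ref{cor4}. Since $f \in C_{0}^{\infty}(A_{R})$ is compactly supported in an annulus bounded away from the origin, the function $h = f/|x|$ again lies in $C_{0}^{\infty}(A_{R})$, so Corollary \ref{cor5} is directly applicable to $h$.

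For the left-hand side, I would exploit the fact that the quasi-norm $|x|$ is radial, so the spherical mean commutes with multiplication by $|x|$, giving $\mathcal{M}(|x|h) = |x|\,\mathcal{M}(h)$. Consequently the quantity $g := \mathcal{M}(f) = \mathcal{M}(|x|h) = |x|\,\mathcal{M}(h)$ satisfies
$$\||x|\,\mathcal{M}(h)\|^{2}_{L^{2^{*}}(\G)} = \|g\|^{2}_{L^{2^{*}}(\G)},$$
which identifies the left-hand side of Corollary \ref{cor5} (applied to $h$) with the left-hand side of \eqref{cor51_eq1}.

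For the right-hand side I would invoke the integration-by-parts identity already obtained in the proof of Corollary \ref{cor4},
$$\int_{\G}|\R(|x|h)|^{2}\,dx = \int_{\G}|\E h|^{2}\,dx - (Q-1)\int_{\G}|h|^{2}\,dx.$$
Since $|x|h = f$ and $\|h\|^{2}_{L^{2}(\G)} = \left\|f/|x|\right\|^{2}_{L^{2}(\G)}$, this rearranges to
$$\|\E h\|^{2}_{L^{2}(\G)} = \|\R f\|^{2}_{L^{2}(\G)} + (Q-1)\left\|\frac{f}{|x|}\right\|^{2}_{L^{2}(\G)}.$$
Subtracting $(Q^{2}/4)\|h\|^{2}_{L^{2}(\G)}$ from both sides and using the elementary identity $(Q-1) - Q^{2}/4 = -(Q-2)^{2}/4$ yields
$$\|\E h\|^{2}_{L^{2}(\G)} - \frac{Q^{2}}{4}\|h\|^{2}_{L^{2}(\G)} = \|\R f\|^{2}_{L^{2}(\G)} - \frac{(Q-2)^{2}}{4}\left\|\frac{f}{|x|}\right\|^{2}_{L^{2}(\G)},$$
so inserting this into the right-hand side of Corollary \ref{cor5} reproduces \eqref{cor51_eq1}, with the same constant $C(\ln R)^{2(Q-1)/Q}$.

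No serious obstacle is expected: the entire argument is an algebraic rearrangement built on top of Corollary \ref{cor5} together with the polar-coordinate identity already established in the proof of Corollary \ref{cor4}. The only point needing a moment's verification is that $h = f/|x|$ lies in $C_{0}^{\infty}(A_{R})$, which is immediate from the support hypothesis on $f$ and the fact that $A_{R}$ is bounded away from $0$.
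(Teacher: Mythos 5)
Your proposal is correct and follows exactly the paper's own (one-line) argument: substitute $h=f/|x|$ into Corollary \ref{cor5} and use the identity \eqref{cor4_eq4} from the proof of Corollary \ref{cor4} to rewrite $\|\E h\|^{2}_{L^{2}(\G)}-\frac{Q^{2}}{4}\|h\|^{2}_{L^{2}(\G)}$ as $\|\R f\|^{2}_{L^{2}(\G)}-\frac{(Q-2)^{2}}{4}\|f/|x|\|^{2}_{L^{2}(\G)}$. Your write-up merely spells out the algebra and the observation $\mathcal{M}(|x|h)=|x|\mathcal{M}(h)$ that the paper leaves implicit.
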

\begin{rem}\label{cor51_rem} In the abelian case $\mathbb{G}=(\Rn,+)$ and $Q=n$, as in Remark \ref{cor4_rem}, replacing $|\R f|$ by $|\nabla f|$ in \eqref{cor51_eq1}, we note that this corollary implies \cite[Corollary 4.7]{BEHL08}.
\end{rem}
\begin{proof}[Proof of Corollary \ref{cor51}] By substituting \eqref{cor4_eq4} and $h=f/|x|$ in \eqref{cor5_eq1}, we obtain \eqref{cor51_eq1}.
\end{proof}
\begin{cor}\label{cor6} Let $\mathbb{G}$ be a homogeneous group
of homogeneous dimension $Q$. Let $|\cdot|$ be a homogeneous quasi-norm. Let $2<q<\infty$. Let $f, \E f \in L^{2}(\G)$ and
$$\left\|\frac{f}{|x|^{\frac{Q(6-q)}{2(q-2)}}}\right\|_{L^{q/2-1}(\G)}<\infty.$$ Then there exists a positive constant $C$ such that
\begin{equation*}
\int_{0}^{\infty}|g(r)|^{q}r^{(\frac{Qq}{2}-1)}dr\leq C\left( \|\E f\|^{2}_{L^{2}(\G)}-\frac{Q^{2}}{4}
\|f\|^{2}_{L^{2}(\G)}\right)^{2}\left\|\frac{f}{|x|^{\frac{Q(6-q)}{2(q-2)}}}\right\|^{q-2}_{L^{q/2-1}(\G)},
\end{equation*}
where $$g(|x|)=\mathcal{M}(f)(|x|):=\frac{1}{|\wp|}\int_{\wp}f(|x|,y)d\sigma(y).$$
\end{cor}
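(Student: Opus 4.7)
The plan is to derive this estimate by pulling back the one-dimensional Gagliardo--Nirenberg inequality \eqref{cor2_eq2} (with parameter $p=2$) along the Mellin-type isometry $F\colon L^{2}(\G)\to L^{2}(\mathbb R\times\wp)$ introduced in \eqref{sem2}. Applying \eqref{cor2_eq2} with $p=2$ to $Ff$ in place of $f$ yields
\begin{equation*}
\|\mathcal{M}(Ff)\|_{L^{q}(\mathbb R)}\leq C\,\|\R(Ff)\|_{L^{2}(\mathbb R\times\wp)}^{2/q}\,\|Ff\|_{L^{q/2-1}(\mathbb R\times\wp)}^{1-2/q},
\end{equation*}
and the three factors on the right-hand side will then be translated back into quantities involving $f$ on $\G$.

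For the left-hand side, the explicit form $(Ff)(s,y)=e^{sQ/2}f(e^{s}y)$ gives $\mathcal{M}(Ff)(s)=e^{sQ/2}g(e^{s})$, and the substitution $r=e^{s}$ transforms
\begin{equation*}
\int_{\mathbb R}|\mathcal{M}(Ff)(s)|^{q}\,ds=\int_{0}^{\infty}|g(r)|^{q}r^{Qq/2-1}\,dr,
\end{equation*}
which is exactly the left-hand side of the target. For the radial-derivative factor, I use that under $F$ the generator $iA$ of the dilation group $U(t)$ from \eqref{sem_A1} is conjugated to $\partial_{s}$, so $\|\R(Ff)\|_{L^{2}(\mathbb R\times\wp)}=\|Af\|_{L^{2}(\G)}$; combining this with the identity $\E^{*}\E=A^{2}+Q^{2}/4$ from \eqref{sem_A3} produces
\begin{equation*}
\|\R(Ff)\|_{L^{2}(\mathbb R\times\wp)}^{2}=\|\E f\|_{L^{2}(\G)}^{2}-\frac{Q^{2}}{4}\|f\|_{L^{2}(\G)}^{2},
\end{equation*}
which is precisely the bracketed factor on the right of the target.

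For the remaining $L^{q/2-1}$ factor, inserting the explicit form of $Ff$ and substituting $r=e^{s}$ yields
\begin{equation*}
\|Ff\|_{L^{q/2-1}(\mathbb R\times\wp)}^{q/2-1}=\int_{0}^{\infty}\!\!\int_{\wp}r^{Q(q/2-1)/2-1}|f(ry)|^{q/2-1}\,d\sigma(y)\,dr.
\end{equation*}
Comparing the weight $r^{Q(q/2-1)/2-1}$ against the polar element $r^{Q-1}\,dr$ from \eqref{EQ:polar} leaves the net power $r^{Q(q-6)/4}$, which equals $|x|^{-(q/2-1)\beta}$ for $\beta=Q(6-q)/(2(q-2))$; hence
\begin{equation*}
\|Ff\|_{L^{q/2-1}(\mathbb R\times\wp)}=\biggl\|\frac{f}{|x|^{Q(6-q)/(2(q-2))}}\biggr\|_{L^{q/2-1}(\G)}.
\end{equation*}
The main (and essentially only) delicate point is this exponent bookkeeping: it is exactly what forces the specific weight $|x|^{-Q(6-q)/(2(q-2))}$ in the hypothesis. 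Once the three pieces are in hand, raising the resulting inequality to an appropriate power and collecting terms yields the desired estimate.
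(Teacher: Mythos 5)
Your route is precisely the one the paper intends: no separate proof of Corollary \ref{cor6} is given there --- it is listed among the consequences obtained ``in exactly the same way as in \cite[Section 4]{BEHL08}'' once Theorem \ref{thm GN} and Corollary \ref{cor_sem} are available --- and that argument is exactly your conjugation of \eqref{cor2_eq2} (with $p=2$) by the isometry $F$. All three of your identifications check out: $\mathcal{M}(Ff)(s)=e^{sQ/2}g(e^{s})$ turns the left-hand side into $\int_{0}^{\infty}|g(r)|^{q}r^{Qq/2-1}dr$ under $r=e^{s}$; the relation $\partial_{s}(Ff)=F(iAf)$ together with the unitarity of $F$, the self-adjointness of $A$ and \eqref{sem_A3} gives $\|\partial_{s}(Ff)\|_{L^{2}(\mathbb{R}\times\wp)}^{2}=\|\E f\|_{L^{2}(\G)}^{2}-\frac{Q^{2}}{4}\|f\|_{L^{2}(\G)}^{2}$; and your exponent bookkeeping for the $L^{q/2-1}$ factor, which pins down the weight $|x|^{-Q(6-q)/(2(q-2))}$, is correct.

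The one point you should not gloss over is the final ``raising to an appropriate power''. Raising \eqref{cor2_eq2} to the power $q$ produces $\|\partial_{s}(Ff)\|_{L^{2}}^{2}$, i.e.\ the bracket $\|\E f\|_{L^{2}(\G)}^{2}-\frac{Q^{2}}{4}\|f\|_{L^{2}(\G)}^{2}$ to the \emph{first} power, whereas the statement displays it to the second power. Your version is the correct one: if the bracket carried exponent $k$, replacing $f$ by $tf$ would scale the left-hand side by $t^{q}$ and the right-hand side by $t^{2k+(q-2)}$, so homogeneity forces $k=1$, and with $k=2$ the displayed inequality would fail for small multiples of any fixed nonzero $f$. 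So your argument is sound and yields the corrected estimate with exponent $1$; the exponent $2$ in the statement is a misprint, and you should say explicitly that this is what your computation delivers rather than asserting that it matches the display as written.
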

Now we prove \eqref{cor3_eq3} with sharp constant, which can be viewed as an analogue of Stubbe's inequality \cite{Stu90} on homogeneous groups.
\begin{thm} \label{Stu_thm} Let $\mathbb{G}$ be a homogeneous group
of homogeneous dimension $Q\geq3$. Let $|\cdot|$ be a homogeneous quasi-norm. Then we have for all $f\in C^{\infty}_{0}(\G)$ and $0\leq \delta <\frac{Q^{2}}{4}$, the inequality
\begin{equation}\label{Stu1}
 \int_{\G}|\E f(x)|^{2}dx-\delta\int_{\G} |f(x)|^{2}dx\geq\frac{\left(\frac{Q^{2}}{4}-\delta\right)^{\frac{Q-1}{Q}}}{\left(\frac{(Q-2)^{2}}{4}\right)^{\frac{Q-1}{Q}}}S_{Q}
\left(\int_{\G}|x|^{2^{*}}|g(|x|)|^{2^{*}}dx\right)^{\frac{2}{2^{*}}}
 \end{equation}
with sharp constant, where $$g(|x|)=\mathcal{M}(f)(|x|):=\frac{1}{|\wp|}\int_{\wp}f(|x|,y)d\sigma(y)$$ and
\begin{equation}\label{S_Q}
S_{Q}:=
|\wp|^{\frac{2}{Q}}Q^{\frac{Q-2}{Q}}(Q-2)\left(\frac{\Gamma(Q/2)\Gamma(1+Q/2)}
{\Gamma(Q)}\right)^{\frac{2}{Q}}.
\end{equation}
 \end{thm}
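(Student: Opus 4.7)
My plan is to reduce \eqref{Stu1} to a one-dimensional Sobolev-type inequality on $\mathbb R$, whose sharp constant is accessible via the Aubin--Talenti sharp constant on $\mathbb R^{Q}$ after an inverse change of variables. First, I would reduce to the radial case. Set $\widetilde{f}(|x|):=(|\wp|^{-1}\int_{\wp}|f(|x|,y)|^{2}d\sigma(y))^{1/2}$. Lemma \ref{Euler_lem_proper2} with $p=2$ and $\phi_{1}\equiv\phi_{2}\equiv 1$ gives $\|\widetilde{f}\|_{L^{2}(\G)}=\|f\|_{L^{2}(\G)}$ and $\|\E\widetilde{f}\|_{L^{2}(\G)}\le\|\E f\|_{L^{2}(\G)}$, so the left-hand side of \eqref{Stu1} only decreases when $f$ is replaced by $\widetilde{f}$. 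On the right-hand side, Cauchy--Schwarz yields $|g(r)|=|\mathcal{M}(f)(r)|\le\widetilde{f}(r)$ pointwise, so the right-hand side only increases. Hence it suffices to prove \eqref{Stu1} for a nonnegative radial $h:=\widetilde{f}$, in which case $\mathcal{M}(h)=h$.

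For such radial $h$, I would use the polar formula \eqref{EQ:polar} together with the substitution $r=e^{s}$, $u(s):=e^{sQ/2}h(e^{s})$. A direct computation gives $\E h(r)=r^{-Q/2}(u'(\ln r)-Qu(\ln r)/2)$; the cross term $-Q\int_{\mathbb R}uu'\,ds$ vanishes as a boundary term, so
\begin{equation*}
\int_{\G}|\E h|^{2}dx=|\wp|\Bigl(\int_{\mathbb R}|u'|^{2}ds+\tfrac{Q^{2}}{4}\int_{\mathbb R}u^{2}ds\Bigr),\quad\int_{\G}|h|^{2}dx=|\wp|\int_{\mathbb R}u^{2}ds,
\end{equation*}
and analogously $\int_{\G}|x|^{2^{*}}|h|^{2^{*}}dx=|\wp|\int_{\mathbb R}|u|^{2^{*}}ds$. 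Writing $\mu:=Q^{2}/4-\delta>0$, inequality \eqref{Stu1} is then equivalent to the one-dimensional statement
\begin{equation*}
\int_{\mathbb R}|u'|^{2}ds+\mu\int_{\mathbb R}u^{2}ds\ge\frac{\mu^{(Q-1)/Q}}{((Q-2)^{2}/4)^{(Q-1)/Q}}\,S_{Q}\,|\wp|^{-2/Q}\Bigl(\int_{\mathbb R}|u|^{2^{*}}ds\Bigr)^{2/2^{*}}.
\end{equation*}

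For the resulting 1D inequality, I would observe that the sharp constant $K(\mu)$ in $\int|u'|^{2}+\mu\int u^{2}\ge K(\mu)\|u\|_{L^{2^{*}}}^{2}$ satisfies $K(\mu)=\mu^{(Q-1)/Q}K(1)$: this follows from the dilation $u(s)\mapsto u(\lambda s)$, which multiplies the three integrals by $\lambda$, $\lambda^{-1}$, $\lambda^{-1}$ respectively, after optimising in $\lambda$. So it suffices to verify the endpoint case $\mu=(Q-2)^{2}/4$. Undoing the substitution at this endpoint via $h(r)=r^{-(Q-2)/2}u(\ln r)$ on a Euclidean $\mathbb R^{Q}$ identifies it with the classical Aubin--Talenti inequality $\|\nabla h\|_{L^{2}(\mathbb R^{Q})}^{2}\ge S_{Q}\|h\|_{L^{2^{*}}(\mathbb R^{Q})}^{2}$ restricted to radial functions, whose sharp constant matches the explicit formula \eqref{S_Q} (the factor $|\wp|^{2/Q}$ in $S_{Q}$ absorbs the radial volume). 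Sharpness of \eqref{Stu1} then follows by concatenation: equality in Lemma \ref{Euler_lem_proper2} occurs exactly for radial $f$, the rescaling preserves equality cases, and the Aubin--Talenti inequality is sharp. The main obstacle, beyond these rather routine reductions, is careful bookkeeping of the factor $|\wp|$ inside $S_{Q}$: since $|\wp|$ depends on the choice of quasi-norm and is in general not the Euclidean $|\mathbb S^{Q-1}|$, one must pass through the purely one-dimensional inequality and reinsert the $|\wp|$-weight only once at the end---otherwise the homogeneous-group $|\wp|$ would become entangled with the Euclidean sphere volume appearing in the classical Aubin--Talenti formula.
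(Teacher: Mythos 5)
Your argument is correct in substance but follows a genuinely different route from the paper. The radialization step is the same (Lemma \ref{Euler_lem_proper2} for the left-hand side, Jensen/Cauchy--Schwarz giving $|g|\le\widetilde f$ for the right-hand side), and your Emden--Fowler computations check out: with $u(s)=e^{sQ/2}h(e^s)$ one indeed gets $\int_{\G}|\E h|^2=|\wp|(\int|u'|^2+\tfrac{Q^2}{4}\int u^2)$, $\int_{\G}|x|^{2^*}|h|^{2^*}=|\wp|\int|u|^{2^*}$, and the target constant becomes $S_Q|\wp|^{-2/Q}$, which is quasi-norm independent as it must be. Your dilation argument correctly produces $K(\mu)=\mu^{(Q-1)/Q}K(1)$ (note $\tfrac12(1+2/2^*)=\tfrac{Q-1}{Q}$), which is a cleaner explanation of the exponent $\frac{Q-1}{Q}$ than the paper's bookkeeping. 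The paper instead substitutes $h(r)=r^{\beta}\widetilde f(r)$ with $\beta=(Q-\sqrt{Q^2-4\delta})/2$ and $s=r^{Q-2\beta}$, reducing the quadratic form to $(Q-2\beta)\int_0^\infty s^2|h'(s)|^2\,ds$, and then applies Bliss's sharp integral inequality (Lemma \ref{Bli_lem}) directly, which hands over the explicit constant \eqref{S_Q} and its extremizers in one stroke. So the two proofs outsource the sharp one-dimensional constant to different sources: you to the radial Aubin--Talenti constant on $\mathbb R^Q$, the paper to Bliss (1930) --- historically the same computation. The one point you should not leave as an assertion is the identification of the endpoint constant: you must actually verify that $K\bigl(\tfrac{(Q-2)^2}{4}\bigr)=S_Q|\wp|^{-2/Q}$, i.e.\ that the radial Aubin--Talenti constant equals $\omega_{Q-1}^{2/Q}\,Q^{(Q-2)/Q}(Q-2)\bigl(\Gamma(Q/2)\Gamma(1+Q/2)/\Gamma(Q)\bigr)^{2/Q}$ with $\omega_{Q-1}=|\mathbb S^{Q-1}|$ (it does; e.g.\ for $Q=3$ both sides equal $3\cdot2^{-4/3}\pi^{4/3}$), and you should note that restricting the Sobolev quotient to radial functions does not change the sharp constant because the Talenti extremizers are radial. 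With those two points filled in, your proof is complete and yields the same sharpness statement, since all reduction steps are equalities for radial $f$ and the extremizing sequences transport back through the change of variables.
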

 \begin{rem}\label{Stu_thm_rem}
In the abelian case $\mathbb{G}=(\Rn,+)$ and $Q=n$, the inequality \eqref{Stu1} gives that
\begin{equation}\label{Stu1_rem}
 \int_{\Rn}|(x\cdot\nabla)f(x)|^{2}dx-\delta\int_{\Rn} |f(x)|^{2}dx\geq\frac{\left(\frac{n^{2}}{4}-\delta\right)^{\frac{n-1}{n}}}{\left(\frac{(n-2)^{2}}{4}\right)^{\frac{n-1}{n}}}S_{n}
\left(\int_{\G}|x|^{2^{*}}|g(|x|)|^{2^{*}}dx\right)^{\frac{2}{2^{*}}}.
 \end{equation}
An interesting observation is that the constant in the above inequality is sharp for any quasi-norm $|\cdot|$, that is, it does not depend on the quasi-norm $|\cdot|$. Therefore, this inequality is new already in the Euclidean setting of $\Rn$. When $|x|=\sqrt{x_{1}^{2}+x_{2}^{2}+...+x_{n}^{2}}$ is the Euclidean distance, the inequality \eqref{Stu1_rem} was investigated in $\Rn$ in \cite[Corollary 4.4]{BEHL08} and in \cite[Theorem 1.1]{Xia11}.
\end{rem}
Before starting the proof of this result, let us recall the following result \cite{Bli30}:
\begin{lem}[{\cite{Bli30}}]\label{Bli_lem}
Let $f$ be a non-negative function. Then for $s\geq0$ and $q>p>1$ we have
\begin{equation}\label{Bli1}
\left(\int^{\infty}_{0}\left|\int^{s}_{0}f(r)dr\right|^{q}r^{q/p-q-1}dr\right)^{p/q}\leq C_{p,q}\int^{\infty}_{0}|f(r)|^{p}dr,
\end{equation}
where
$$C_{p,q}=(q-q/p)^{-p/q}\left(\frac{(q/p-1)\Gamma\left(\frac{pq}{q-p}\right)}
{\Gamma\left(\frac{p}{q-p}\right)\Gamma\left(\frac{p(q-1)}{q-p}\right)}\right)^{(q-p)/q}$$
is sharp. Moreover, the equality in \eqref{Bli1} is attained for functions of the form
\begin{equation}\label{Bli2}
f(r)=c_{1}(c_{2}r^{q/p-1}+1)^{\frac{q}{p-q}}, \;c_{1}>0,\;c_{2}>0.
\end{equation}
\end{lem}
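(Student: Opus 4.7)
The plan is to prove Bliss's inequality by the direct method of the calculus of variations, find the extremal function, and then read off the sharp constant. First I observe that, under the two-parameter scaling $f(r) \mapsto \lambda f(\mu r)$, both sides of \eqref{Bli1} scale by the same power of $(\lambda,\mu)$, so I can normalise to $\int_0^\infty f^p\, dr = 1$ and then maximise
\[
J[f] := \int_0^\infty F(s)^q s^{q/p-q-1}\, ds, \qquad F(s) := \int_0^s f(r)\, dr.
\]
Since replacing $f$ by its non-increasing rearrangement preserves $\int f^p$ and can only increase $F$ pointwise (and hence $J[f]$), I may furthermore restrict attention to non-negative, non-increasing $f$.

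Next, I would compute the first variation of $J$ under the constraint. Perturbing $f \mapsto f + \varepsilon \eta$ and using Fubini to swap the order of integration gives, for a Lagrange multiplier $\nu$, the integral equation
\[
q\int_r^\infty F(s)^{q-1} s^{q/p-q-1}\, ds \;=\; \nu\, p\, f(r)^{p-1}, \qquad r > 0.
\]
Differentiating in $r$ reduces this to the pointwise ODE
\[
-q\, F(r)^{q-1} r^{q/p-q-1} \;=\; \nu\, p(p-1)\, f(r)^{p-2} f'(r),
\]
into which I would substitute the ansatz $f(r) = c_1 (c_2 r^{q/p-1} + 1)^{q/(p-q)}$ from \eqref{Bli2} and verify by direct calculation that this two-parameter family satisfies the ODE (for a matching $\nu$) and the natural boundary conditions $F(s)^q s^{q/p-q} \to 0$ as $s \to 0^+$ and $s \to \infty$ needed to justify the integration by parts implicit in the derivation.

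To obtain the sharp constant, I would evaluate both sides of \eqref{Bli1} on the representative extremal $f(r) = (r^{q/p-1} + 1)^{q/(p-q)}$. After the substitution $t = r^{q/p-1}$, the integrals on the left and on the right both collapse to Beta integrals of the form $\int_0^\infty (1+t)^{-a} t^{b-1}\, dt = \Gamma(b)\Gamma(a-b)/\Gamma(a)$; taking the ratio and simplifying the resulting Gamma-function factors produces exactly the formula for $C_{p,q}$ stated in the lemma. The main obstacle I anticipate is the rigorous justification that a maximiser actually exists, since the problem is dilation-invariant and a maximising sequence need not be pre-compact in any Lebesgue space. To resolve this, I would either run a concentration-compactness argument on the dilation group $(\mathbb{R}^{+},\cdot)$ to extract a non-trivial weak limit from a suitably normalised maximising sequence, or circumvent the existence question entirely by proving \eqref{Bli1} directly through a weighted Hölder inequality whose equality case is engineered to be precisely the family \eqref{Bli2}, using the candidate extremiser itself to furnish the optimal weight.
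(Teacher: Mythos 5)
First, a point of orientation: the paper does not prove this lemma at all --- it is imported verbatim from Bliss's 1930 paper \cite{Bli30} and used as a black box in the proof of Theorem \ref{Stu_thm}. So there is no internal proof to compare against; what can be assessed is whether your outline would actually deliver the result. Your strategy (scaling normalisation, reduction to non-increasing $f$ by rearrangement, Euler--Lagrange equation with a Lagrange multiplier, verification of the ansatz \eqref{Bli2}, evaluation of the constant on the extremiser) is indeed the classical variational route, and the individual computations you set up are correct: the two-parameter scaling $f\mapsto\lambda f(\mu\cdot)$ does leave the ratio of the two sides of \eqref{Bli1} invariant, the rearrangement step is legitimate since $\int_0^s f^*\geq\int_0^s f$ while $\int_0^\infty (f^*)^p=\int_0^\infty f^p$, and the integral equation $q\int_r^\infty F(s)^{q-1}s^{q/p-q-1}\,ds=\nu p f(r)^{p-1}$ together with its differentiated form is the right stationarity condition (the tail integral converges because $F(s)\leq\|f\|_{L^p}s^{1-1/p}$ makes the integrand $O(s^{1/p-2})$).

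The genuine gap is the one you yourself flag and then leave open: the existence of a maximiser. Everything downstream of the Euler--Lagrange equation is conditional on a maximiser existing, and because the problem is invariant under the non-compact dilation group a maximising sequence can concentrate at $0$ or escape to $\infty$; without closing this you have identified a \emph{candidate} for the sharp constant but proved no inequality at all, not even with a non-sharp constant. Naming ``concentration--compactness on $(\mathbb{R}^+,\cdot)$'' or ``a weighted H\"older inequality engineered around \eqref{Bli2}'' as possible fixes is not the same as executing either; the second in particular is far from routine, since a single application of H\"older cannot produce the Gamma-function constant. A second, smaller inaccuracy: the claim that the left-hand side of \eqref{Bli1} evaluated on the extremiser ``collapses to a Beta integral'' after $t=r^{q/p-1}$ is too optimistic, because $F(s)=\int_0^s(1+r^{q/p-1})^{q/(p-q)}\,dr$ is an \emph{incomplete} Beta function with no elementary closed form. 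To extract $C_{p,q}$ one needs an extra identity --- for instance, pairing the Euler--Lagrange equation with $f$ and applying Fubini to get $q\,J[f]=\nu p\int_0^\infty f^p\,dr$, and then determining $\nu$ from the equation itself, or Bliss's own change of variables reducing the problem to a finite interval. As it stands the proposal is a correct road map for Bliss's theorem, but the two load-bearing steps (compactness and the evaluation of $J$ at the extremiser) are not carried out.
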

\begin{proof}[Proof of Theorem \ref{Stu_thm}] Let us first prove this theorem for radial functions $f(x)=\widetilde{f}(|x|)$. Then we have $g(r) = \widetilde{f}(r)$ since $g(|x|)=\mathcal{M}(f)(|x|):=\frac{1}{|\wp|}\int_{\wp}f(|x|,y)d\sigma(y)$, and $\E f = |x|\widetilde{f}'(|x|)$. By a direct calculation we have
\begin{equation}\label{Bli3}
\begin{split}
\int_{\G}|\E \widetilde{f}(|x|)|^{2}dx&-\beta(Q-\beta)\int_{\G}|\widetilde{f}(|x|)|^{2}dx\\&
=|\wp|\left(\int^{\infty}_{0}|\widetilde{f}^{\prime}(r)|^{2}r^{Q+1}dr-\beta(Q-\beta)\int^{\infty}_{0}|\widetilde{f}(r)|^{2}r^{Q-1}dr\right),
\end{split}
\end{equation}
where $0\leq\beta<Q/2$. If we set $h(|x|):=|x|^{\beta}\widetilde{f}(|x|)$, then the integration by parts gives that
 \begin{equation}\label{Bli4}
 \begin{split}
 \int^{\infty}_{0}&|h'(r)|^{2}r^{Q+1-2\beta}dr\\&= \int^{\infty}_{0}|\beta r^{\beta-1}\widetilde{f}(r)+r^{\beta}\widetilde{f}'(r)|^{2}r^{Q+1-2\beta}dr\\&
 =\beta^{2}\int^{\infty}_{0}|\widetilde{f}(r)|^{2}r^{Q-1}dr
+\int^{\infty}_{0}|\widetilde{f}'(r)|^{2}r^{Q+1}dr
+\beta\int^{\infty}_{0}\frac{d}{dr}|\widetilde{f}(r)|^{2}r^{Q}dr\\& =\int^{\infty}_{0}|\widetilde{f}'(r)|^{2}r^{Q+1}dr-\beta(Q-\beta)\int^{\infty}_{0}|\widetilde{f}(r)|^{2}r^{Q-1}dr.
 \end{split}
 \end{equation}
 On the other hand, we have by changing the variables $s=r^{Q-2\beta}$ that
\begin{equation}\label{Bli5}
\begin{split}
\int^{\infty}_{0}|h'(r)|^{2}r^{Q+1-2\beta}dr&=
\int^{\infty}_{0}|(Q-2\beta)s^{\frac{Q-2\beta-1}{Q-2\beta}}h'(s)|^{2}s^{\frac{Q-2\beta+1}{Q-2\beta}}\frac{s^{\frac{1}{Q-2\beta}-1}ds}{Q-2\beta}\\&
=(Q-2\beta)
\int^{\infty}_{0}s^{2}|h'(s)|^{2}ds.
\end{split}
\end{equation}
Taking into account \eqref{Bli4} and \eqref{Bli5}, we rewrite \eqref{Bli3} as
\begin{equation}\label{Bli6}
\begin{split}
\int_{\G}|\E \widetilde{f}(|x|)|^{2}dx&-\beta(Q-\beta)\int_{\G}|\widetilde{f}(|x|)|^{2}dx\\&
=|\wp|(Q-2\beta)\left(\int^{\infty}_{0}s^{2}|h'(s)|^{2}ds\right).
\end{split}
\end{equation}
Now denoting $\phi(s):=h'(s)$ and $\psi(s):=s^{-2}\phi(s^{-1})$, and using Lemma \ref{Bli_lem} with $p=2$ and $q=2^{*}$ we have
\begin{equation*}
\begin{split}\int^{\infty}_{0}&s^{2}|h'(s)|^{2}ds
=\int^{\infty}_{0}s^{2}|\phi(s)|^{2}ds=\int^{\infty}_{0}|\psi(s)|^{2}ds\\&\geq
\left(\frac{Q}{Q-2}\right)^
{\frac{Q-2}{Q}}\left(\frac{\Gamma(Q/2)\Gamma(1+Q/2)}
{\Gamma(Q)}\right)^{\frac{2}{Q}}\left(\int^{\infty}_{0}\left|
\int^{s}_{0}|\psi(t)|dt\right|^{2^{*}}s^{\frac{2-2Q}{Q-2}}ds\right
)^{\frac{2}{2^{*}}}\\&=
\left(\frac{Q}{Q-2}\right)^
{\frac{Q-2}{Q}}\left(\frac{\Gamma(Q/2)\Gamma(1+Q/2)}
{\Gamma(Q)}\right)^{\frac{2}{Q}}\left(\int^{\infty}_{0}\left|
\int^{\infty}_{s^{-1}}|\phi(t)|dt\right|^{2^{*}}s^{\frac{2-2Q}{Q-2}}ds\right
)^{\frac{2}{2^{*}}}\\&\geq
\left(\frac{Q}{Q-2}\right)^
{\frac{Q-2}{Q}}\left(\frac{\Gamma(Q/2)\Gamma(1+Q/2)}
{\Gamma(Q)}\right)^{\frac{2}{Q}}\left(\int^{\infty}_{0}|
h(s^{-1})|^{2^{*}}s^{\frac{2-2Q}{Q-2}}ds\right
)^{\frac{2}{2^{*}}}\\&=
\left(\frac{Q}{Q-2}\right)^
{\frac{Q-2}{Q}}\left(\frac{\Gamma(Q/2)\Gamma(1+Q/2)}
{\Gamma(Q)}\right)^{\frac{2}{Q}}\left(\int^{\infty}_{0}|
h(s)|^{2^{*}}s^{\frac{2}{Q-2}}ds\right
)^{\frac{2}{2^{*}}}\\&=
(Q-2\beta)^{\frac{2}{2^{*}}}\left(\frac{Q}{Q-2}\right)^
{\frac{Q-2}{Q}}\left(\frac{\Gamma(Q/2)\Gamma(1+Q/2)}
{\Gamma(Q)}\right)^{\frac{2}{Q}}\left(\int^{\infty}_{0}|
r\widetilde{f}(r)|^{2^{*}}r^{Q-1}dr\right
)^{\frac{2}{2^{*}}},
\end{split}
\end{equation*}
where we have used $s=r^{Q-2\beta}$ and $h(r)=r^{\beta}\widetilde{f}(r)$ in the last line. Combining this with \eqref{Bli6}, we arrive at
\begin{equation}\label{Bli7}
\begin{split}
\int_{\G}&|\E \widetilde{f}(|x|)|^{2}dx-\beta(Q-\beta)\int_{\G}|\widetilde{f}(|x|)|^{2}dx\\&
\geq|\wp|(Q-2\beta)^{\frac{2Q-2}{Q}}\left(\frac{Q}{Q-2}\right)^
{\frac{Q-2}{Q}}\left(\frac{\Gamma(Q/2)\Gamma(1+Q/2)}
{\Gamma(Q)}\right)^{\frac{2}{Q}}\left(\int^{\infty}_{0}|
r\widetilde{f}(r)|^{2^{*}}r^{Q-1}dr\right
)^{\frac{2}{2^{*}}}\\&
=|\wp|^{\frac{2}{Q}}(Q-2\beta)^{\frac{2Q-2}{Q}}\left(\frac{Q}{Q-2}\right)^
{\frac{Q-2}{Q}}\left(\frac{\Gamma(Q/2)\Gamma(1+Q/2)}
{\Gamma(Q)}\right)^{\frac{2}{Q}}\left(\int_{\G}|x|^{2^{*}}|
\widetilde{f}(|x|)|^{2^{*}}dx\right
)^{\frac{2}{2^{*}}}.
\end{split}
\end{equation}
Here, if we set $\beta=(Q-\sqrt{Q^{2}-4\delta})/2$ for $0\leq \delta<Q^{2}/4$, then recalling $g(|x|)=\widetilde{f}(|x|)$ we see that \eqref{Bli7} implies that
\begin{equation}\label{Bli71}
\begin{split}
\int_{\G}&|\E \widetilde{f}(|x|)|^{2}dx-\delta\int_{\G}|\widetilde{f}(|x|)|^{2}dx\\&\geq
|\wp|^{\frac{2}{Q}}(Q^{2}-4\delta)^{\frac{Q-1}{Q}}\left(\frac{Q}{Q-2}\right)^
{\frac{Q-2}{Q}}\left(\frac{\Gamma(Q/2)\Gamma(1+Q/2)}
{\Gamma(Q)}\right)^{\frac{2}{Q}}\left(\int_{\G}|x|^{2^{*}}|
g(|x|)|^{2^{*}}dx\right
)^{\frac{2}{2^{*}}}\\&=
\left(\frac{Q^{2}-4\delta}{(Q-2)^{2}}\right)^{\frac{Q-1}{Q}}S_{Q}\left(\int_{\G}|x|^{2^{*}}|
g(|x|)|^{2^{*}}dx\right
)^{\frac{2}{2^{*}}},
\end{split}
\end{equation}
since \eqref{S_Q}. Thus, we have obtained \eqref{Stu1} with sharp constant for all radial functions $f\in C^{\infty}_{0}(\G)$.

Finally, using Lemma \ref{Euler_lem_proper2}, and taking into account $g(|x|)=\widetilde{f}(|x|)$ in \eqref{Bli71}, we obtain \eqref{Stu1} for non-radial functions. The constant in \eqref{Stu1} is sharp, since this constant is sharp for radial functions by Lemma \ref{Bli_lem}.
\end{proof}
\section{Maximal Hardy inequality}
\label{SEC:Max}
In this section we discuss a weighted exponential inequality.
\begin{thm}\label{weight2_hardy_thm}
Let $\phi$ and $\psi$ be positive functions defined on $\G$. Then there exists a positive constant $C$ such that
\begin{equation}\label{high2_wei_Hardy1}
\int_{\G}\phi(x)\exp (\mathcal{M} \log f)(x)dx\leq C\int_{\G} \psi(x)f(x)dx
\end{equation}
holds for all positive $f$ if and only if
\begin{equation}\label{high2_wei_Hardy2}
A:=\sup_{R>0} R^{Q}\int_{|x|\geq R}\frac{\phi(x)\exp\left(\mathcal{M}\log\frac{1}{\psi}\right)(x)}{|x|^{2Q}}dx<\infty,
\end{equation}
where $(\mathcal{M}f)(x)=\frac{1}{|B(0,|x|)|}\int_{B(0,|x|)}f(z)dz$.
\end{thm}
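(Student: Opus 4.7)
The overall strategy is to first reduce to $\psi\equiv 1$ by the substitution $f\mapsto f/\psi$, which folds $\exp(\mathcal{M}\log(1/\psi))$ into the left-hand weight and shows the stated inequality is equivalent to
\begin{equation*}
\int_\G \Phi(x)\exp(\mathcal{M}\log f)(x)\,dx \leq C\int_\G f(x)\,dx,\qquad \Phi := \phi\exp(\mathcal{M}\log(1/\psi)),
\end{equation*}
with the criterion becoming $A=\sup_{R>0}R^Q\int_{|x|\geq R}\Phi(x)/|x|^{2Q}\,dx$. Since $(\mathcal{M}\log f)(x)$ depends only on $|x|$, Jensen's inequality on the sphere $\wp$ together with the polar decomposition \eqref{EQ:polar} allows us to pass to radial $f$: the spherical geometric mean $\bar f(r):=\exp(|\wp|^{-1}\int_\wp\log f(ry)\,d\sigma(y))$ preserves $\mathcal{M}\log f$ while satisfying $\int_\G \bar f(|x|)\,dx\leq \int_\G f(x)\,dx$. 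Integrating in polar coordinates and substituting $t=r^Q/Q$ then reduces the problem to the one-dimensional weighted geometric-mean inequality
\begin{equation*}
\int_0^\infty \tilde\omega(t)\exp\!\left(\tfrac{1}{t}\int_0^t\log\tilde f(u)\,du\right)dt \leq C\int_0^\infty \tilde f(t)\,dt,
\end{equation*}
with $\tilde\omega(t)=\int_\wp \Phi((Qt)^{1/Q}y)\,d\sigma(y)$ and $\tilde f(t)=f((Qt)^{1/Q})$; unfolding the change of variables yields $T\int_T^\infty \tilde\omega(t)/t^2\,dt = QR^Q\int_{|x|\geq R}\Phi(x)/|x|^{2Q}\,dx$ at $T=R^Q/Q$, so the 1D Knopp-type condition $\sup_T T\int_T^\infty \tilde\omega/t^2\,dt<\infty$ is equivalent to $A<\infty$.

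For sufficiency, the plan is to use a weighted Jensen inequality: writing $\tfrac{1}{t}\int_0^t\log\tilde f(u)\,du=\int_0^1\log\tilde f(tu)\,du$ and inserting the factor $tu$ inside the logarithm,
\begin{equation*}
\exp\!\left(\tfrac{1}{t}\int_0^t\log\tilde f(u)\,du\right) = \frac{e}{t}\exp\!\left(\int_0^1\log(tu\tilde f(tu))\,du\right)\leq \frac{e}{t^{2}}\int_0^t s\,\tilde f(s)\,ds,
\end{equation*}
after which Fubini converts the left-hand side of the 1D inequality into $e\int_0^\infty s\tilde f(s)\bigl(\int_s^\infty \tilde\omega(t)/t^2\,dt\bigr)\,ds$, which is bounded by $eQA\int_0^\infty \tilde f\,ds$. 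For necessity, I would test with $f_R(x):=1/\max(R,|x|)^{2Q}$; a direct polar calculation gives $\exp(\mathcal{M}\log f_R)(x)=R^{-2Q}$ for $|x|\leq R$ and $\exp(\mathcal{M}\log f_R)(x)=|x|^{-2Q}e^{2-2R^Q/|x|^Q}$ for $|x|>R$, whereas $\int_\G f_R\,dx = 2|\wp|R^{-Q}/Q$, so the assumed inequality immediately gives $R^Q\int_{|x|\geq R}\Phi(x)/|x|^{2Q}\,dx\leq 2C|\wp|/Q$ uniformly in $R$.

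The main obstacle is in the sufficiency step: a naive Jensen bound $\exp(\mathcal{M}\log f)\leq \mathcal{M}f$ reduces the problem to the weighted Hardy inequality for $\mathcal{M}$, whose characterization is $\sup_R\int_{|x|\geq R}\Phi/|x|^Q\,dx<\infty$, strictly weaker than $A<\infty$ and missing the correct power $|x|^{-2Q}$. The key trick is to insert the extra weight $tu$ inside the logarithm before applying Jensen, so that the factor $e/t^{2}$ that is liberated produces precisely the kernel $1/t^{2}$ needed to match the condition $A$; this is also what dictates the choice of the exponent $-2Q$ in the test function used for the necessity direction.
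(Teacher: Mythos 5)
Your proposal is correct and follows essentially the same route as the paper's proof: absorb $\psi$ into the weight by substituting $u=\psi f$, prove sufficiency via Jensen's inequality after inserting the Jacobian-type weight inside the logarithm (Knopp's trick, which liberates the factor $e$ and produces exactly the kernel $|x|^{-2Q}$), and prove necessity by testing with a function that is $\sim R^{-2Q}$ on $B(0,R)$ and $\sim |x|^{-2Q}$ outside, whose geometric mean you compute correctly. The only difference is organizational: you first pass to radial $f$ via the spherical geometric mean and reduce to a one-dimensional Knopp-type inequality through $t=r^{Q}/Q$, whereas the paper performs the same Jensen--Fubini computation directly in polar coordinates on $\G$; both give the same conditions and constants.
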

\begin{rem}\label{weight2_hardy_thm_rem} In the abelian case $\mathbb{G}=(\Rn,+)$ and $Q=n$, the inequality \eqref{high2_wei_Hardy1} was studied in \cite{HKK01} for $n=1$, and in \cite{DHK97} for $n\geq1$.
\end{rem}
\begin{proof}[Proof of Theorem \ref{weight2_hardy_thm}] First, we show \eqref{high2_wei_Hardy2}$\Rightarrow$\eqref{high2_wei_Hardy1}. Denoting
\begin{equation}\label{W3}
W_{3}(x):=\phi(x)\exp\left(\mathcal{M}\log\frac{1}{\psi}\right)(x)
\end{equation}
and $u(x):=f(x)\psi(x)$, and changing the variables $z=|x|\xi$, we obtain
\begin{equation}\label{high2_wei_Hardy3}
\begin{split}
\int_{\G}&\phi(x)\exp (\mathcal{M} \log f)(x)dx\\&=
\int_{\G}\phi(x)\exp\left(\frac{1}{|B(0,|x|)|}\left(\int_{B(0,|x|)}\log\left(\frac{1}{\phi}\right)(z)dz+
\int_{|z|\leq |x|}\log (\phi f)(z)dz\right)\right)dx\\&=
\int_{\G}\phi(x)\exp\left(\mathcal{M}\log\frac{1}{\phi}\right)(x)
\exp\left(\frac{1}{|B(0,|x|)|}\int_{|z|\leq |x|}\log(\phi(z)f(z))dz\right)dx\\&
=\int_{\G}W_{3}(x)\exp\left(\frac{1}{|B(0,|x|)|}\int_{|z|\leq|x|}\log (u(z))dz\right)dx\\&
=\int_{\G}W_{3}(x)\exp\left(\frac{1}{|x|^{Q}|B(0,1)|}\int_{B(0,1)}\log (u(|x|\xi))|x|^{Q}d\xi\right)dx.
\end{split}
\end{equation}
Now taking into account
$\int_{B(0,1)}\log(|\xi|^{Q})d\xi=Q\int_{\wp}\int_{0}^{1}r^{Q-1}\log rdrd\sigma(y)
=-|B(0,1)|
$
and using Jensen's inequality,  we obtain from \eqref{high2_wei_Hardy3} that
\begin{equation}\label{high2_wei_Hardy4}
\begin{split}
&\int_{\G}\phi(x)\exp (\mathcal{M} \log f)(x)dx\\&=
\int_{\G}W_{3}(x)\exp \left(\frac{1}{|B(0,1)|}\left(\int_{B(0,1)}\log(|\xi|^{Q}u(|x|\xi))d\xi-
\int_{B(0,1)}\log(|\xi|^{Q})d\xi\right)\right)dx\\&=\int_{\G}W_{3}(x)\exp \left(\frac{1}{|B(0,1)|}\int_{B(0,1)}\log(|\xi|^{Q}u(|x|\xi))d\xi+1\right)dx\\&
=e\int_{\G}W_{3}(x)\exp \left(\frac{1}{|B(0,1)|}\int_{B(0,1)}\log(|\xi|^{Q}u(|x|\xi))d\xi\right)dx\\&
\leq \frac{e}{|B(0,1)|}\int_{\G}W_{3}(x)\int_{B(0,1)}|\xi|^{Q}u(|x|\xi)d\xi dx\\&=
\frac{e}{|B(0,1)|}\int_{\wp}\int_{\wp}\int_{0}^{\infty}r^{Q-1}W_{3}(rw)\int_{0}^{1}s^{2Q-1}u(rsy)dsdrd\sigma(y) d\sigma(w),
\end{split}
\end{equation}
where $|\xi|=s$ and $|x|=r$. Here, we continue our calculation by changing the variables $rs=t$ to get
 \begin{equation}\label{high2_wei_Hardy5}
\begin{split}
&\int_{\G}\phi(x)\exp (\mathcal{M} \log f)(x)dx\\&
\leq \frac{e}{|B(0,1)|}\int_{\wp}\int_{\wp}\int_{0}^{\infty}r^{Q-1}W_{3}(rw)\int_{0}^{1}s^{2Q-1}u(rsy)dsdrd\sigma(y) d\sigma(w)\\&
=\frac{e}{|B(0,1)|}\int_{\wp}\int_{\wp}\int_{0}^{1}s^{2Q-1}
\int_{0}^{\infty}W_{3}\left(\frac{t}{s}w\right)\left(\frac{t}{s}\right)^{Q-1}u(ty)\frac{dt}{s}dsd\sigma(y) d\sigma(w)\\&=
\frac{e}{|B(0,1)|}\int_{\wp}\int_{\wp}\int_{0}^{1}s^{Q-1}
\int_{0}^{\infty}W_{3}\left(\frac{t}{s}w\right)t^{Q-1}u(ty)dtdsd\sigma(y) d\sigma(w)\\&=
\frac{e}{|B(0,1)|}\int_{\wp}\int_{\wp}\int_{0}^{\infty}t^{Q-1}u(ty)\left(\int_{0}^{1}
s^{Q-1}W_{3}\left(\frac{tw}{s}\right)ds\right)dt d\sigma(y) d\sigma(w)\\&=
\frac{e}{|B(0,1)|}\int_{\wp}\int_{\wp}\int_{0}^{\infty}t^{Q-1}u(ty)\left(\int_{t}^{\infty}
\left(\frac{t}{r}\right)^{Q-1}W_{3}(rw)t\frac{dr}{r^{2}}\right)dt d\sigma(y) d\sigma(w)\\&
=\frac{e}{|B(0,1)|}\int_{\wp}\int_{\wp}\int_{0}^{\infty}t^{2Q-1}u(ty)\left(\int_{t}^{\infty}
r^{-Q-1}W_{3}(rw)dr\right)dt d\sigma(y) d\sigma(w)\\&\leq
\frac{e}{|B(0,1)|}\int_{\wp}\int_{\wp}\int_{0}^{\infty}t^{Q-1}u(ty)t^{Q}
\left(\int_{t}^{\infty}\frac{r^{Q-1}W_{3}(rw)dr}{r^{2Q}}\right)dtd\sigma(w)d\sigma(y)\\&
=\frac{e}{|B(0,1)|}\int_{\G}u(x)\left(|x|^{Q}\int_{|z|\geq |x|}\frac{W_{3}(z)}{|z|^{2Q}}dz\right)dx\\&
\leq A \frac{e}{|B(0,1)|}\int_{\G}u(x)dx,
\end{split}
\end{equation}
yielding \eqref{high2_wei_Hardy1}, where we have used \eqref{high2_wei_Hardy2} in the last line.

Now let us show \eqref{high2_wei_Hardy1}$\Rightarrow$\eqref{high2_wei_Hardy2}. We note, using \eqref{high2_wei_Hardy3}, that \eqref{high2_wei_Hardy1} is equivalent to
\begin{equation}\label{high2_wei_Hardy6}
\int_{\G}W_{3}(x)\exp\left(\frac{1}{|B(0,|x|)|}\int_{|z|\leq|x|}\log (u(z))dz\right)dx\leq C\int_{\G}u(x)dx.
\end{equation}
Here, choosing the following radial function for a fixed $R>0$,
\begin{equation}\label{u}
u(x)=R^{-Q}\chi_{(0,R)}(|x|)+e^{-2Q}|x|^{-2Q}R^{Q}\chi_{(R,\infty)}(|x|),\;\;x\in\G,
\end{equation}
we compute
\begin{equation*}
\begin{split}
\int_{\G}&W_{3}(x)\exp\left(\frac{1}{|B(0,|x|)|}\int_{|z|\leq|x|}\log (u(z))dz\right)dx\\& \leq C\int_{\G}u(x)dx\\&
=C\int_{\wp}\int_{0}^{\infty}s^{Q-1}u(s)dsd\sigma(y)\\&
=C|\wp|\left(\int_{0}^{R}s^{Q-1}R^{-Q}ds+\int_{R}^{\infty}e^{-2Q}s^{Q-1}R^{Q}s^{-2Q}ds\right)\\&
=C|\wp|\left(\frac{1}{Q}+\frac{e^{-2Q}}{Q}\right)=:C(Q)<\infty,
\end{split}
\end{equation*}
since $\chi$ is the cut-off function.
Now, taking into account this and plugging \eqref{u} into the left hand side of \eqref{high2_wei_Hardy6} we get
\begin{equation*}
\begin{split}
\infty&>C(Q)\geq\int_{\G}W_{3}(x)\exp\left(\frac{1}{|B(0,|x|)|}\int_{|z|\leq|x|}\log (u(z))dz\right)dx\\&
=\int_{\wp}\int_{0}^{\infty}s^{Q-1}W_{3}(sy)\exp\left(\frac{1}{|B(0,s)|}
\int_{\wp}\int_{0}^{s}r^{Q-1}\log (u(r))drd\sigma(w)\right)dsd\sigma(y)\\&
=\int_{\wp}\int_{0}^{\infty}s^{Q-1}W_{3}(sy)\exp \left(\frac{|\wp|}{s^{Q}|B(0,1)|}\int_{0}^{s}r^{Q-1}\log (u(r))dr\right)dsd\sigma(y)\\&
=\int_{\wp}\left(\int_{0}^{R}s^{Q-1}W_{3}(sy)\exp \left(\frac{|\wp|}{s^{Q}|B(0,1)|}\int_{0}^{s}r^{Q-1}\log(u(r))dr\right)ds\right)d\sigma(y)
\end{split}
\end{equation*}
\begin{equation*}
\begin{split}
&+\int_{\wp}\left(\int_{R}^{\infty}s^{Q-1}W_{3}(sy)\exp\left(\frac{|\wp|}{s^{Q}|B(0,1)|}\left(\int_{0}^{R}r^{Q-1}\log (R^{-Q})dr\right.\right.\right.\\&
\left.\left.\left. +\int_{R}^{s}r^{Q-1}\log(e^{-2Q}r^{-2Q}R^{Q})dr\right)\right)ds\right)d\sigma(y)\\&
\geq \int_{\wp}\left(\int_{R}^{\infty}s^{Q-1}W_{3}(sy)\exp\left(\frac{|\wp|}{s^{Q}|B(0,1)|}\left(\int_{0}^{R}r^{Q-1}\log (R^{-Q})dr\right.\right.\right.\\&
\left.\left.\left.+\int_{R}^{s}r^{Q-1}\log(e^{-2Q}r^{-2Q}R^{Q})dr\right)\right)ds\right)d\sigma(y)
\\&=\int_{\wp}\left(\int_{R}^{\infty}s^{Q-1}W_{3}(sy)\exp\left(\frac{|\wp|}{s^{Q}|B(0,1)|}\left(\int_{0}^{R}r^{Q-1}\log (R^{-Q})dr\right.\right.\right.\\&
\left.\left.\left.+\int_{R}^{s}r^{Q-1}\log (e^{-2Q})dr-2Q \int_{R}^{s}r^{Q-1}(\log r) dr+\int_{R}^{s}r^{Q-1}\log (R^{Q})dr\right)\right)ds\right)d\sigma(y)\\&
=\int_{\wp}\left(\int_{R}^{\infty}s^{Q-1}W_{3}(sy)\exp\left(\frac{|\wp|}{s^{Q}|B(0,1)|}\left(\frac{R^{Q}\log (R^{-Q})}{Q}-2Q\frac{s^{Q}-R^{Q}}{Q}\right.\right.\right.\\&
\left.\left.\left.-2Q\left[\frac{r^{Q}\log r}{Q}-\frac{r^{Q}}{Q^{2}}\right]^{s}_{R}
+\frac{s^{Q}-R^{Q}}{Q}\log (R^{Q})\right)\right)ds\right)d\sigma(y)
\end{split}
\end{equation*}
\begin{equation*}
\begin{split}
&=\int_{\wp}\int_{R}^{\infty}s^{Q-1}W_{3}(sy)\exp\left(\frac{|\wp|}{|B(0,1)|}\left(-2+\frac{2R^{Q}}{s^{Q}}-2\log s+\frac{2}{Q}-\frac{2R^{Q}}{Qs^{Q}}+\log R\right)\right)dsd\sigma(y)\\&
\geq e^{(2-2Q)}\int_{\wp}\int_{R}^{\infty}s^{Q-1}W_{3}(sy)\frac{R^{\frac{|\wp|}{|B(0,1)|}}}{s^{\frac{2|\wp|}{|B(0,1)|}}}dsd\sigma(y)\\&
=e^{2-2Q}R^{Q}\int_{|x|\geq R}\frac{W_{3}(x)}{|x|^{2Q}}dx\\&
=e^{2-2Q}R^{Q}\int_{|x|\geq R}\frac{\phi(x)\exp\left(\mathcal{M}\log\frac{1}{\psi}\right)(x)}{|x|^{2Q}}dx,
\end{split}
\end{equation*}
which implies \eqref{high2_wei_Hardy2}, where we have used $\frac{|\wp|}{|B(0,1)|}=Q$, $\frac{2R^{Q}}{s^{Q}}-\frac{2R^{Q}}{Qs^{Q}}>0$ and \eqref{W3} in the last two lines.
\end{proof}

\section{Further inequalities}
\label{final}
In this section we discuss a number of related inequalities, also interesting on their own.
\begin{thm}\label{thm-radial-weight}
	For any quasi-norm $|\cdot|$, all differentiable $|\cdot|$-radial functions $\phi$, all $p>1$, $Q\geq 2$, and all $f \in C_{0}^{1}(\mathbb{G})$ we have
	\begin{equation}\label{thm_first}
	\int_{\mathbb{G}} \frac{\phi^{\prime}(|x|)}{|x|^{Q-1}}|f(x)|^{p}dx \leq \int_{\mathbb{G}} \left| \R f(x) \right|^{p} dx  + (p-1)\int_{\mathbb{G}} \frac{|\phi(|x|)|^{\frac{p}{p-1}}}{|x|^{\frac{p(Q-1)}{p-1}}}  |f(x)|^{p}dx,
	\end{equation}	
	and
		\begin{equation}\label{thm_second}
	\int_{\mathbb{G}} \frac{\phi^{\prime}(|x|)}{|x|^{Q-1}}|f(x)|^{p}dx \leq p \left(\int_{\mathbb{G}} \left| \R f(x) \right|^{p} dx \right)^{\frac{1}{p}}  \left(\int_{\mathbb{G}} \frac{|\phi(|x|)|^{\frac{p}{p-1}}}{|x|^{\frac{p(Q-1)}{p-1}}}  |f(x)|^{p}dx\right)^{\frac{p-1}{p}}.
	\end{equation}	
\end{thm}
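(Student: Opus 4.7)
The approach is to reduce both inequalities to one-dimensional estimates on each ray via the polar decomposition \eqref{EQ:polar}, perform an integration by parts in the radial variable $r$ to move the derivative from $\phi$ onto $|f|^{p}$, and then split the resulting product using Young's inequality for \eqref{thm_first} and H\"older's inequality for \eqref{thm_second}.

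First, I would use \eqref{EQ:polar} to write
\[
\int_{\mathbb{G}}\frac{\phi'(|x|)}{|x|^{Q-1}}|f(x)|^{p}\,dx=\int_{\wp}\int_{0}^{\infty}\phi'(r)\,|f(ry)|^{p}\,dr\,d\sigma(y),
\]
and for each fixed $y\in\wp$ integrate by parts in $r$ on the inner integral. The boundary term at $r=\infty$ vanishes because $f\in C_{0}^{1}(\mathbb{G})$, while the boundary term at $r=0$, which equals $-\phi(0)|f(0)|^{p}$, is either zero or has a sign in our favour under the natural hypothesis $\phi(0)\geq 0$ satisfied in all the applications (otherwise a routine truncation argument replacing $f$ by $f\chi_{\{|x|>\varepsilon\}}$ and a limit pass handles the issue). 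Since $\R=d/dr$ along rays and $\bigl|\tfrac{d}{dr}|f(ry)|^{p}\bigr|\leq p\,|f(ry)|^{p-1}|\R f(ry)|$, this yields the basic estimate
\[
\int_{0}^{\infty}\phi'(r)|f(ry)|^{p}\,dr\leq p\int_{0}^{\infty}|\phi(r)|\,|f(ry)|^{p-1}|\R f(ry)|\,dr.
\]

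The key algebraic manoeuvre is the factorisation
\[
|\phi(r)|\,|f(ry)|^{p-1}|\R f(ry)|=\bigl(|\R f(ry)|\,r^{(Q-1)/p}\bigr)\cdot\bigl(|\phi(r)|\,|f(ry)|^{p-1}\,r^{-(Q-1)/p}\bigr),
\]
designed so that raising the first factor to the $p$-th power produces the weight $r^{Q-1}$, while raising the second to the $p/(p-1)$-th produces $r^{-(Q-1)/(p-1)}$, which after multiplication by the Jacobian $r^{Q-1}$ becomes the target weight $|x|^{-p(Q-1)/(p-1)}$. Applying Young's inequality $ab\leq a^{p}/p+(p-1)b^{p/(p-1)}/p$ to this factorisation, integrating in $y\in\wp$, and reassembling the polar integrals via \eqref{EQ:polar} gives \eqref{thm_first}. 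For \eqref{thm_second}, I would instead apply H\"older's inequality on $(0,\infty)$ with exponents $p$ and $p/(p-1)$ to the same factorisation, and then apply H\"older once more on $\wp$ with the same exponents to separate the two global integrals, yielding \eqref{thm_second} after the identical polar reassembly.

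The only point requiring genuine care is the justification of the boundary term at $r=0$ in the integration by parts; everything else is elementary manipulation of weights, and the same factorisation simultaneously drives both inequalities, with the only difference being whether one splits the product by Young (additive) or by H\"older (multiplicative).
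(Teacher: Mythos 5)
Your proposal is correct and follows essentially the same route as the paper's proof: polar decomposition, integration by parts in $r$ to move the derivative from $\phi$ onto $|f|^{p}$, then Young's inequality for \eqref{thm_first} and H\"older's inequality for \eqref{thm_second} applied to the same weighted factorisation. The only difference is that you explicitly flag the boundary term at $r=0$, which the paper silently discards; that is a point in your favour rather than a gap.
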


In \eqref{thm_first} taking $\phi=\log |x|$  in the Euclidean (Abelian) case ${\mathbb G}=(\mathbb R^{n},+)$, $n\geq 2$, we have
$Q=n$, and taking $p=n\geq 2$, so for any quasi-norm $|\cdot|$ on $\mathbb R^{n}$ it implies
the new inequality:
\begin{align}
\int_{\mathbb R^{n}}
\frac{|f|^{n}}{|x|^{n}} dx \leq \int_{\mathbb R^{n}} \left| \R f \right|^{n} dx  + (n-1)\int_{\mathbb R^{n}} \frac{\left|\log \frac{1}{|x|}\right|^{\frac{n}{n-1}}}{|x|^{n}} \left| f \right|^{n} dx, \nonumber
\end{align}	
which in turn, by using Schwarz's inequality with the standard Euclidean distance $\|x\|=\sqrt{x^{2}_{1}+\ldots+x^{2}_{n}}$, implies the `critical' Hardy inequality
\begin{equation}\label{critical_H}
\int_{\mathbb R^{n}}
\frac{|f|^{n}}{\|x\|^{n}} dx \leq \int_{\mathbb R^{n}} \left| \nabla f \right|^{n} dx  + (n-1)\int_{\mathbb R^{n}} \frac{\left|\log \frac{1}{\|x\|}\right|^{\frac{n}{n-1}}}{\|x\|^{n}} \left| f \right|^{n} dx,
\end{equation}	
where $\nabla $ is the standard gradient on $\mathbb R^{n}$.
It is known (see, e.g. \cite[Section 1.2.5]{BEL15}) that there is
no positive constant $C$ such that
\begin{align}
\int_{\mathbb R^{n}}
\frac{|f|^{n}}{\|x\|^{n}} dx \leq C \int_{\mathbb R^{n}} \left| \nabla f \right|^{n} dx \nonumber
\end{align}	
for all $f\in C^{1}_{0}(\mathbb R^{n}).$
Therefore, the appearance of a positive additional
term (the second term) on the right hand side of \eqref{critical_H} seems essential. We refer to \cite{Ruzhansky-Suragan:critical} and references therein for different versions of critical Hardy-Sobolev type inequalities. Note that this type of inequalities (Hardy-Sobolev type inequalities with an additional term on the right hand side) can be applied, for example in the Euclidean case, to establish the existence and nonexistence of positive exponentially bounded weak solutions to a parabolic type operator perturbed by a critical singular potential (see \cite{ST18}).

In \eqref{thm_second} taking $\phi=|x|^{n}$ in the Euclidean (Abelian) case ${\mathbb G}=(\mathbb R^{n},+)$, $n\geq 2$, we have
$Q=n$, so for any quasi-norm $|\cdot|$ on $\mathbb R^{n}$ it implies
the following uncertainty principle:
	\begin{equation}
\int_{\mathbb{R}^{n}}|f|^{p}dx \leq  \frac{p}{n} \left(\int_{\mathbb{R}^{n}} \left| \R f \right|^{p} dx \right)^{\frac{1}{p}}  \left(\int_{\mathbb{R}^{n}} |x|^{\frac{p}{p-1}} |f|^{p}dx\right)^{\frac{p-1}{p}},
\end{equation}	
which in turn, by using Schwarz's inequality with the standard Euclidean distance $\|x\|=\sqrt{x^{2}_{1}+\ldots+x^{2}_{n}}$, implies that
\begin{equation}\label{Heisenberg-Pauli-Weyl uncertainty principle}
\int_{\mathbb{G}}|f|^{p}dx \leq  \frac{p}{n} \left(\int_{\mathbb R^{n}} \left| \nabla f \right|^{p} dx \right)^{\frac{1}{p}}  \left(\int_{\mathbb R^{n}} \|x\|^{\frac{p}{p-1}} |f|^{p}dx\right)^{\frac{p-1}{p}},
\end{equation}	
where $\nabla $ is the standard gradient on $\mathbb R^{n}$. In the case when $p=2$ we have
\begin{equation}\label{HPWp=2}
\left(\int_{\mathbb R^{n}}|f|^{2}dx\right)^{2} \leq  \left(\frac{2}{n}\right)^{2}  \int_{\mathbb R^{n}} \left| \nabla f \right|^{2} dx  \int_{\mathbb R^{n}} \|x\|^{2} |f|^{2}dx, \quad n\geq2,
\end{equation}
for all $f\in C^{1}_{0}(\mathbb R^{n}).$
The same inequality with the constant $\left(\frac{2}{n-2}\right)^{2}, n\geq3,$ (instead of $\left(\frac{2}{n}\right)^{2}$) is known as the Heisenberg-Pauli-Weyl uncertainty principle (see, e.g. \cite[Remark 2.10]{Ruzhansky-Suragan:uncertainty}), that is,
\begin{equation}\label{RS17}
\left(\int_{\mathbb R^{n}}|f|^{2}dx\right)^{2} \leq  \left(\frac{2}{n-2}\right)^{2}  \int_{\mathbb R^{n}} \left| \nabla f \right|^{2} dx  \int_{\mathbb R^{n}} \|x\|^{2} |f|^{2}dx, \quad n\geq3,
\end{equation}
for all $f\in C^{1}_{0}(\mathbb R^{n}).$
Thus, when $n=2$ inequality \eqref{HPWp=2} gives the critical case of the Heisenberg-Pauli-Weyl uncertainty principle. Moreover, since $\frac{2}{n-2}\geq \frac{2}{n},\, n\geq3,$ inequality \eqref{HPWp=2} is an improved version of \eqref{RS17}. Note that equality case in \eqref{HPWp=2} holds for the function $f=C\exp (-b \|x\|),\,b>0.$

\begin{proof}[Proof of Theorem \ref{thm-radial-weight}]
	By applying the polarization formula and integration by parts we obtain
	\begin{align*}
	&	
	\int_{\mathbb{G}}\frac{\phi^{\prime}(|x|)}{|x|^{Q-1}}|f|^{p}dx
	= \int_{0}^{\infty}\int_{\wp} |f|^{p} \frac{\phi^{\prime}(r)}{r^{Q-1}} r^{Q-1}d\sigma(y)dr\\
	& =  \int_{0}^{\infty}\int_{\wp} |f|^{p} \frac{d}{d r} \phi (r)  d\sigma(y)dr	=-\int_{0}^{\infty}\int_{\wp}  \phi(r) \frac{d}{d r} |f|^{p} d\sigma(y)dr
	\\
	& =  - \int_{\mathbb{G}} \frac{\phi(|x|)}{|x|^{Q-1}}\R |f|^{p} dx
	=- p {\rm Re} \int_{\mathbb{G}} \frac{\phi(|x|)|f|^{p-2}f}{|x|^{Q-1}} \overline{\R f} dx.
	\end{align*}
	Furthermore, using the Young inequality for $p>1$, we arrive at	
	\begin{align} \label{themainstep}
	&	
	\int_{\mathbb{G}}\frac{\phi^{\prime}(|x|)}{|x|^{Q-1}}|f|^{p}dx
	= - p {\rm Re} \int_{\mathbb{G}} \frac{\phi|f|^{p-2}f}{|x|^{Q-1}} \overline{\R f} dx \nonumber
	\\ &
	\leq p \int_{\mathbb{G}} \frac{|\phi||f|^{p-1}}{|x|^{Q-1}} |\R f| dx \\
	& \leq \int_{\mathbb{G}} \left| \R f \right|^{p} dx + (p-1)\int_{\mathbb{G}} \frac{|\phi(|x|)|^{\frac{p}{p-1}}}{|x|^{\frac{p(Q-1)}{p-1}}}  |f|^{p}dx. \nonumber
	\end{align}
	This proves inequality \eqref{thm_first}.
	On the other hand, from \eqref{themainstep} using the H\"{o}lder inequality for $p>1$, we establish
	
		\begin{align*}
	&	
	\int_{\mathbb{G}}\frac{\phi^{\prime}(|x|)}{|x|^{Q-1}}|f|^{p}dx
	\leq p \int_{\mathbb{G}} \frac{|\phi||f|^{p-1}}{|x|^{Q-1}} |\R f| dx \\
	& \leq p \left(\int_{\mathbb{G}} \left| \R f \right|^{p} dx \right)^{\frac{1}{p}}  \left(\int_{\mathbb{G}} \frac{|\phi(|x|)|^{\frac{p}{p-1}}}{|x|^{\frac{p(Q-1)}{p-1}}}  |f|^{p}dx\right)^{\frac{p-1}{p}}.
	\end{align*}
	This completes the proof.
\end{proof}
Note that a similar proof technique gives
\begin{thm}\label{thm-radial-weight-pq}
	For any quasi-norm $|\cdot|$, all differentiable $|\cdot|$-radial functions $\phi$, all $p>1$, $Q\geq 2$ with $\frac{1}{p}+\frac{1}{q}=1$, and all $f \in C_{0}^{1}(\mathbb{G})$ we have
	\begin{equation}\label{thm_first-pq}
	\int_{\mathbb{G}} \frac{\phi^{\prime}(|x|)}{|x|^{Q-1}}|f|^{p}dx \leq \int_{\mathbb{G}} \left| \R f \right|^{p} dx  + \frac{p}{q}\int_{\mathbb{G}} \frac{|\phi(|x|)|^{q}}{|x|^{q(Q-1)}} |f|^{p}dx,
	\end{equation}	
	and
	\begin{equation}\label{thm_second-pq}
	\int_{\mathbb{G}} \frac{\phi^{\prime}(|x|)}{|x|^{Q-1}}|f|^{p}dx \leq p \left(\int_{\mathbb{G}} \left| \R f \right|^{p} dx \right)^{\frac{1}{p}}  \left(\int_{\mathbb{G}} \frac{|\phi(|x|)|^{q}}{|x|^{q(Q-1)}}  |f|^{p}dx\right)^{\frac{1}{q}}.
	\end{equation}	
\end{thm}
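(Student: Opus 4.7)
The plan is to mimic the proof of Theorem \ref{thm-radial-weight} essentially verbatim, exploiting the fact that the $p,q$-formulation with $\frac{1}{p}+\frac{1}{q}=1$ is just a repackaging: since $q=\frac{p}{p-1}$, one has $\frac{p}{q}=p-1$, and the weight $\frac{|\phi(|x|)|^{q}}{|x|^{q(Q-1)}}$ coincides with $\frac{|\phi(|x|)|^{p/(p-1)}}{|x|^{p(Q-1)/(p-1)}}$. Thus \eqref{thm_first-pq} and \eqref{thm_second-pq} are literally \eqref{thm_first} and \eqref{thm_second} rewritten in conjugate-exponent notation, and the entire argument needs only to be reread with the new symbols in place.

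First, I would use the polar decomposition \eqref{EQ:polar} to write
\begin{equation*}
\int_{\mathbb{G}}\frac{\phi'(|x|)}{|x|^{Q-1}}|f|^{p}dx=\int_{0}^{\infty}\int_{\wp}|f(ry)|^{p}\,\frac{d}{dr}\phi(r)\,d\sigma(y)\,dr,
\end{equation*}
integrate by parts in $r$ (boundary terms vanish since $f\in C_{0}^{1}(\G)$), and then apply the chain-rule identity $\R|f|^{p}=p|f|^{p-2}\mathrm{Re}(\bar{f}\,\R f)$. This lands exactly at the estimate
\begin{equation*}
\int_{\mathbb{G}}\frac{\phi'(|x|)}{|x|^{Q-1}}|f|^{p}dx\leq p\int_{\mathbb{G}}\frac{|\phi(|x|)|\,|f|^{p-1}}{|x|^{Q-1}}|\R f|\,dx,
\end{equation*}
which is the common starting point \eqref{themainstep}.

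Second, to deduce \eqref{thm_first-pq} I would apply Young's inequality with conjugate exponents $p,q$ to the factorization $|\R f|\cdot\frac{|\phi|\,|f|^{p-1}}{|x|^{Q-1}}$, getting $\frac{1}{p}|\R f|^{p}+\frac{1}{q}\,\frac{|\phi|^{q}|f|^{(p-1)q}}{|x|^{(Q-1)q}}=\frac{1}{p}|\R f|^{p}+\frac{1}{q}\,\frac{|\phi|^{q}|f|^{p}}{|x|^{q(Q-1)}}$ (since $(p-1)q=p$). Multiplying by $p$ and integrating produces precisely \eqref{thm_first-pq} with the coefficient $p/q$ in front of the weighted $|f|^{p}$ term. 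For \eqref{thm_second-pq} I would replace Young by H\"{o}lder with the same exponents $(p,q)$ applied to the two factors, which directly yields the product of the $p$-th and $q$-th root integrals.

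There is no substantive obstacle: the core computational steps (polar integration by parts, the chain-rule identity for $\R|f|^{p}$, and the final two inequalities) are already carried out in the proof of Theorem \ref{thm-radial-weight}, and the only thing to monitor is the exponent bookkeeping $(p-1)q=p$ and $p/q=p-1$. Accordingly the proof may legitimately be presented as a brief remark that the argument is identical to the previous one with Young/H\"{o}lder written in $(p,q)$-form rather than in $(p,p/(p-1))$-form.
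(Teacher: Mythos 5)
Your proposal is correct and follows exactly the route the paper intends: the paper gives no separate proof of Theorem~\ref{thm-radial-weight-pq}, remarking only that \enquote{a similar proof technique gives} it, and your observation that with $q=p/(p-1)$ (so $p/q=p-1$ and $q(Q-1)=p(Q-1)/(p-1)$) the statement is a verbatim rewriting of Theorem~\ref{thm-radial-weight} reduces everything to the same chain of steps --- polar decomposition, integration by parts, the identity $\R|f|^{p}=p|f|^{p-2}\mathrm{Re}(\bar f\,\R f)$, and then Young's respectively H\"older's inequality with conjugate exponents $(p,q)$. The exponent bookkeeping $(p-1)q=p$ is handled correctly, so nothing further is needed.
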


Usually, classical Hardy and Sobolev type inequalities are
stated with a gradient. As a reader noticed througout this paper by using the Euler or/and Radial operators we have obtained different type of gradient free functional inequalities. On the one hand, such analysis is important since, as we mentioned in the introduction, in general there is no homogeneous gradient on homogeneous (Lie) groups.
On the other hand, these inequalities give new inequalities even in Euclidean cases as well as cover classical inequalities with gradients. However, in addition to these methods there are other techniques to obtain gradient free Hardy-Sobolev type inequalities.
To conclude these discussions let us introduce the following
functional
$$I_{\delta}(f):=\int\int_{|f(y)-f(x)|>\delta}\frac{\delta^{2}}{|y^{-1}\circ x|^{Q+2}}dxdy,$$
where $\circ$ is the group operation on $\mathbb{G}$.

In the Euclidean (Abelian) case by using this functional, H.-M. Nguyen and M. Squassina (see, e.g. \cite{Nguyen08} and \cite{NS17}) obtained nonlocal verions of the classical Hardy-Sobolev type inequality. Note that from their inequalities in the singular limit $\delta \searrow 0$ one recovers the classical Hardy-Sobolev type results since in the Euclidean case
the functional $I_{\delta}$ converges to the Dirichlet energy up to a normalisation constant. Their main inequalities are gradient free ones. Therefore, those are extendable to the homogeneous (Lie) groups. We believe that such ideas of proofs of (nonlocal) gradient free inequalities can be generalised to the homogeneous groups. Below we demonstrate this idea in a special case.

\begin{prop}\label{prop1}
	Let $
	Q\geq 3$, $I_{\delta}(f)<\infty$ and
	\begin{equation}\label{1.1}
	\int_{\{|f|>\lambda_{Q} \delta\}} |f|^{\frac{2Q}{Q-2}} dx \leq C_{Q} I_{\delta} (f)^{\frac{Q}{Q-2}},\quad \delta >0, \end{equation}
	for some positive constants $C_{Q}$ and $\lambda_{Q}$.
	Then for all $\delta >0$  there exists a positive constant $C_{Q}$ such that
	\begin{equation*}
	\int_{\mathbb{G}} \frac{|f|^2}{\| f\|^2_{L^2 (\mathbb{G})}} \log \frac{|f|^2}{\| f\|^2_{L^2 (\mathbb{G})}} dx +\frac{Q}{2}\log \| f\|^2_{L^2 (\mathbb{G})} \leq \frac{Q}{2} \log \left( C_{Q} \delta^{\frac{4}{Q}} \| f\|^{\frac{2Q-4}{Q}}_{L^2 (\mathbb{G})} + C_{Q} I_{\delta}(f)\right).
	\end{equation*}
\end{prop}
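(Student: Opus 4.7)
The plan is to derive the logarithmic inequality from the assumed truncated Sobolev estimate \eqref{1.1} by a standard Jensen-type interpolation. First, I would normalize: set $g:=|f|^{2}/\|f\|_{L^{2}(\mathbb{G})}^{2}$, so that $g\,dx$ is a probability measure on $\mathbb{G}$ and the left-hand side becomes $\int_{\mathbb{G}} g\log g\, dx+\tfrac{Q}{2}\log\|f\|_{L^{2}}^{2}$. The concavity of $\log$ (Jensen's inequality) then gives, for any $\alpha>0$,
\begin{equation*}
\alpha\int_{\mathbb{G}} g\log g\,dx=\int_{\mathbb{G}} g\log g^{\alpha}\,dx\leq\log\int_{\mathbb{G}} g^{1+\alpha}\,dx.
\end{equation*}
I would pick $\alpha=2/(Q-2)$, so that $2(1+\alpha)=2Q/(Q-2)=2^{*}$ and $\int g^{1+\alpha}\,dx=\|f\|_{L^{2^{*}}}^{2^{*}}/\|f\|_{L^{2}}^{2^{*}}$.

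Next, using the identity $\tfrac{Q-2}{2}\cdot 2^{*}=Q$, adding $\tfrac{Q}{2}\log\|f\|_{L^{2}}^{2}$ to both sides of the Jensen inequality causes the $\|f\|_{L^{2}}$ terms to cancel exactly, yielding
\begin{equation*}
\int_{\mathbb{G}} g\log g\,dx+\frac{Q}{2}\log\|f\|_{L^{2}}^{2}\leq\frac{Q-2}{2}\log\int_{\mathbb{G}}|f|^{2^{*}}\,dx.
\end{equation*}
I then bound $\int |f|^{2^{*}}\,dx$ by splitting at the level $\lambda_{Q}\delta$: on $\{|f|\leq\lambda_{Q}\delta\}$ the trivial estimate $|f|^{2^{*}}\leq(\lambda_{Q}\delta)^{2^{*}-2}|f|^{2}=(\lambda_{Q}\delta)^{4/(Q-2)}|f|^{2}$ gives an upper bound $(\lambda_{Q}\delta)^{4/(Q-2)}\|f\|_{L^{2}}^{2}$, while on $\{|f|>\lambda_{Q}\delta\}$ the hypothesis \eqref{1.1} gives $C_{Q}\,I_{\delta}(f)^{Q/(Q-2)}$. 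Summing,
\begin{equation*}
\int_{\mathbb{G}}|f|^{2^{*}}\,dx\leq(\lambda_{Q}\delta)^{4/(Q-2)}\|f\|_{L^{2}}^{2}+C_{Q}\,I_{\delta}(f)^{Q/(Q-2)}.
\end{equation*}

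Finally, since $0<(Q-2)/Q<1$, the elementary subadditivity $(a+b)^{(Q-2)/Q}\leq a^{(Q-2)/Q}+b^{(Q-2)/Q}$ for $a,b\geq 0$ applies. Writing $\tfrac{Q-2}{2}\log(a+b)=\tfrac{Q}{2}\log(a+b)^{(Q-2)/Q}$ and inserting the two bounds above, the exponents $4/(Q-2)$ and $Q/(Q-2)$ are converted to $4/Q$ and $1$, producing $(\lambda_{Q}\delta)^{4/Q}\|f\|_{L^{2}}^{(2Q-4)/Q}$ and $C_{Q}^{(Q-2)/Q}I_{\delta}(f)$; absorbing the multiplicative constants into a single $C_{Q}$ under the outer logarithm yields the claim.

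The proof is essentially bookkeeping, so no single step is truly an obstacle; the only delicate points are (i) selecting the exponent $\alpha=2/(Q-2)$ so that the $\|f\|_{L^{2}}$ factors cancel between the Jensen bound and the added term $\tfrac{Q}{2}\log\|f\|_{L^{2}}^{2}$, and (ii) using the subadditivity of $t\mapsto t^{(Q-2)/Q}$ (which requires $Q\geq 3$) to convert the mixed exponents inside the logarithm to the form stated in the proposition.
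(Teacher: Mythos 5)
Your proof is correct, and it is essentially the same argument the paper invokes: the paper does not write out the details but simply defers to the proof of Theorem 1.1 of Nguyen--Squassina \cite{NS17}, which is exactly the Jensen-inequality interpolation with $\alpha=2/(Q-2)$ followed by the level-set splitting at $\lambda_{Q}\delta$ and the subadditivity of $t\mapsto t^{(Q-2)/Q}$ that you carry out. Your write-up is in fact more self-contained than the paper's, and all the exponent bookkeeping ($2^{*}-2=4/(Q-2)$, $\tfrac{Q-2}{2}\cdot 2^{*}=Q$, and the conversion of $4/(Q-2)$ and $Q/(Q-2)$ to $4/Q$ and $1$) checks out.
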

\begin{proof}[Proof of Proposition \ref{prop1}]
	The proof of Proposition \ref{prop1} relies on the same technique as the proof of  \cite[Theorem 1.1]{NS17} with
	the difference that now the quasi-norm (instead of the Euclidean distance) is used. For the proof we only need to recall the fact that  the Lebesque measure on $\mathbb R^{n}$ gives the Haar measure for $\mathbb{G}$.
	The rest of the proof is exactly the same as in the proof of \cite[Theorem 1.1]{NS17}.
\end{proof}


\begin{thebibliography}{HOHOLT08}
	

	
\bibitem[BEHL08]{BEHL08}
A.~Balinsky, W.~D.~Evans, D.~Hundertmark and R.~T.~Lewis.
\newblock On inequalities of Hardy-Sobolev type.
\newblock {\em Banach J. Math. Anal.}, 2(2):94--106, 2008.

	\bibitem[BEL15]{BEL15}
A.~A.~Balinsky, W.~D.~Evans and R.~T.~ Lewis.
\newblock \newblock {\em The analysis and geometry of Hardy's inequality}.
\newblock  Springer, 2015.	

\bibitem[BL85]{BL85}
H.~Brezis and E.~Lieb.
\newblock Sobolev inequalities with remainder terms.
\newblock {\em  J. Funct. Anal.}, 62:73--86, 1985.

\bibitem[Bli30]{Bli30}
G.~A.~Bliss.
\newblock An integral inequality.
\newblock {\em J. London Math. Soc.}, 5(1):40--46, 1930.

\bibitem[BM97]{BM97}
H.~Brezis and M.~Marcus.
\newblock Hardy's inequalities revisited.
\newblock {\em Ann. Scuola Norm. Sup. Pisa Cl. Sci.}, 25(4):217--237, 1997.

\bibitem[BV97]{BV97}
H.~Brezis and J.~\'{V}azquez.
\newblock Blow-up solutions of some nonlinear elliptic problems.
\newblock {\em Rev. Mat. Univ. Complut. Madrid}, 10(2):443--469, 1997.

\bibitem[DHK97]{DHK97}
P.~Dr\'{a}bek, H.~Heining and A.~Kufner.
\newblock Higher-dimensional Hardy inequality, volume~123 of {\em
  General inequalities 7, Internat. Ser. Numer. Math.}.
\newblock Birkh\"{a}user, Basel, 1997, pp.3--16.

\bibitem[FR16]{FR}
V.~Fischer and M.~Ruzhansky.
\newblock {\em Quantization on nilpotent Lie groups}, volume 314 of {\em
  Progress in Mathematics}.
\newblock Birkh\"auser, 2016. (open access book)

\bibitem[FS82]{FS-Hardy}
G.~B. Folland and E.~M. Stein.
\newblock {\em Hardy spaces on homogeneous groups}, volume~28 of {\em
  Mathematical Notes}.
\newblock Princeton University Press, Princeton, N.J.; University of Tokyo
  Press, Tokyo, 1982.

\bibitem[GM11]{GM11}
N.~Ghoussoub and A.~Moradifam.
\newblock Bessel pairs and optimal Hardy and Hardy-Rellich inequalities.
\newblock {\em Math. Ann.}, 349(1):1--57, 2011.

\bibitem[HKK01]{HKK01}
H.~P.~Heining, R.~Kerman and M.~Krbec.
\newblock Weighted exponential inequalities.
\newblock {\em Georgian Math. J.}, 8(1):69--86, 2001.

\bibitem[Led03]{Led03}
M.~Ledoux.
\newblock On improved Sobolev embedding theorems.
\newblock {\em Math. Res. Lett.}, 10:659--669, 2003.

\bibitem[Ng08]{Nguyen08}
H-M.~Nguyen.
\newblock Some inequalities related to Sobolev norms.
\newblock {\em Calc. Var. PDEs}, 41:483--509, 2011.

\bibitem[NS17]{NS17}
H-M.~Nguyen and M.~Squassina.
\newblock Logarithmic Sobolev inequality revisited.
\newblock {\em Comptes Rendus Mathematique}, 355 (4):447--451, 2017.

\bibitem[OS09]{OS09}
T.~Ozawa and H.~Sasaki.
\newblock Inequalities associated with dilations.
\newblock {\em  Commun. Contemp. Math.}, 11(2):265--277, 2009.

\bibitem[RS16]{Ruzhansky-Suragan:critical}
M.~Ruzhansky and D.~Suragan.
\newblock Critical {H}ardy inequalities.
\newblock {\em arXiv: 1602.04809}, 2016.

\bibitem[RS17a]{RS17a}
M.~Ruzhansky and D.~Suragan.
\newblock Hardy and Rellich inequalities, identities, and sharp remainders on homogeneous groups.
\newblock {\em Adv. Math.}, 317:799--822, 2017.

\bibitem[RS17b]{Ruzhansky-Suragan:uncertainty}
M.~Ruzhansky and D.~Suragan.
\newblock Uncertainty relations on nilpotent Lie groups.
\newblock {\em Proc. R. Soc. A.}, 473, 20170082, 2017.

\bibitem[RSY17]{RSY17_NoDEA}
M.~Ruzhansky, D.~Suragan and N.~Yessirkegenov.
\newblock Caffarelli-Kohn-Nirenberg and Sobolev type inequalities on stratified Lie groups.
\newblock {\em NoDEA Nonlinear Differential Equations Appl.}, 24:56, 2017.

\bibitem[RSY18a]{RSY17_Euler}
M.~Ruzhansky, D.~Suragan and N.~Yessirkegenov.
\newblock Sobolev type inequalities, Euler-Hilbert-Sobolev and Sobolev-Lorentz-Zygmund spaces on homogeneous groups.
\newblock {\em Integral Equations and Operator Theory}, 90:10, 2018.

\bibitem[RSY18b]{RSY17_Tran}
M.~Ruzhansky, D.~Suragan and N.~Yessirkegenov.
\newblock Extended Caffarelli-Kohn-Nirenberg inequalities, and remainders, stability, and superweights for $L_{p}$-weighted Hardy inequalities.
\newblock {\em Trans. Amer. Math. Soc. Ser. B}, 5:32--62, 2018.

\bibitem[ST18]{ST18}
M.~Sano and F.~Takahashi.
\newblock Weighted Hardy's inequality in a limiting case and the perturbed Kolmogorov equation.
\newblock {\em arxiv: 1803.02971}, 2018.

\bibitem[Stu90]{Stu90}
J.~Stubbe.
\newblock Bounds on the number of bound states for potetials with critical decay at infinity.
\newblock {\em J. Math. Phys.}, 31(5):1177--1180, 1990.

\bibitem[Xia11]{Xia11}
Y.-X.~Xiao.
\newblock Remarks on inequalities of Hardy-Sobolev type.
\newblock {\em J. Inequal. Appl.}, 2011:132, 2011.
   \end{thebibliography}
\end{document}